\numberwithin{equation}{section}
\newcommand{\be}{\begin{equation}}
\newcommand{\ee}{\end{equation}}
\newcommand{\ben}{\begin{eqnarray*}}
\newcommand{\enn}{\end{eqnarray*}}
\newtheorem{theorem}{\textbf Theorem}[section]
 \numberwithin{equation}{section}
\newcommand{\pa}{\partial}
\newcommand{\f}{\frac}
\newcommand{\ci}{\cite}
\newcommand{\bd}{\begin{displaymath}}
\newcommand{\ed}{\end{displaymath}}
\begin{document}
\title{{\textbf{An energy-based discontinuous Galerkin method for semilinear wave equations} }}
\author{Daniel Appel\"o \thanks{daniel.appelo@colorado.edu, Department of Applied Mathematics, University of Colorado, Boulder, Boulder CO 80309, United States.  Supported by NSF Grant DMS-1913076.},
Thomas Hagstrom \thanks{thagstrom@smu.edu, Department of Mathematics, Southern Methodist University, Dallas TX 75206, United States. Supported by NSF Grant DMS-1418871.},
Qi Wang \thanks{qwang@swufe.edu.cn.  Department of Mathematics, Southwestern University of Finance and Economics, 555 Liutai Ave, Wenjiang, Chengdu, Sichuan 611130, China.  Supported by Fundamental Research Funds for the Central Universities (JBK2002002 and JBK1805001) and NSF-China (No. 11501460).}
and Lu Zhang \thanks{luzhang@smu.edu,  Department of Mathematics, Southern Methodist University, Dallas TX 75206, United States.  Corresponding author.}
        }
\date{}
\maketitle
\begin{abstract}
We generalize the energy-based discontinuous Galerkin method proposed in \cite{appelo2015new} to
second-order semilinear wave equations. A stability and convergence analysis is presented
along with numerical experiments demonstrating optimal convergence for certain choices of
the interelement fluxes. Applications to the sine-Gordon equation include simulations of
breathers, kink, and anti-kink solitons.
%%%%
\end{abstract}

\begin{quote}
\noindent
{\sc MSC}: {65M12, 65M60}\\
\noindent
{\sc Keywords}: discontinuous Galerkin,  semilinear wave equation, upwind flux
\end{quote}

\section{Introduction}\label{sec1}

Discontinuous Galerkin methods have emerged as a method-of-choice for
solving hyperbolic initial-boundary value problems in first-order Friedrichs
form. Advantages include guaranteed stability on unstructured grids, local
time evolution, and arbitrary order \ci{HesthavenWarburton08}. However,
typical formulations of wave equations in physics arise as second-order systems.
We believe it is advantageous to directly treat such second-order systems.
First, first-order reformulations require more variables and thus may be less memory efficient
while requiring additional initial and boundary conditions; the latter must be
compatible with the original equations for the first-order reformulation to be equivalent.
The energy-based DG method, introduced in \ci{appelo2015new} and extended to elastic
and advective waves in \ci{el_dg_dath,zhang2019energy}, aims to provide a DG method for
second-order wave equations which is as simple, reliable, flexible, and general as DG
methods applied to Friedrichs systems.

The essential ideas underpinning the energy-based formulation are:
\begin{description}
\item[i.] Introduction of a variable which is \textbf{weakly} equal to the
time derivative of the solution (see (\ref{scheme1}) below),
\item[ii.] Construction of numerical fluxes based on the energy flux at element boundaries.
\end{description}
Advantages of the proposed method are that we use the minimal number of variables
required (compare with LDG \ci{chou2014optimal,AURSAND2017478,CiCP-23-747} and HDG \ci{GodunovHDG} methods) and
simple conservative or upwind fluxes can be chosen to be independent of the mesh
(compare with IPDG \ci{RiviereWheelerWave,GSSwave}). However, although it seems
clear that the method can be adapted to any second-order linear hyperbolic
system, the formulation for nonlinear problems presented in
\ci{appelo2015new} is both incomplete and inconvenient. In particular, the
analogue of (\ref{scheme1}) proposed in \ci{appelo2015new}
involves a nonlinear function of
$\phi_u$. Thus the equation would typically be overdetermined. Moreover, to
guarantee the energy estimate the system must be satisfied for
$\phi_u=u$, which directly leads to a nonlinear problem to calculate
$\f {\pa u}{\pa t}$. Our main result in this work is to show how all these
potential issues can be avoided for semilinear problems.

The remainder of the paper is organized as follows. In Section 2 we introduce the
semidiscretization, proposing a number of interelement fluxes and proving the
basic energy estimate. In Section 3 we prove a suboptimal error estimate
and present several numerical experiments in Section 4. The latter demonstrate
optimal convergence for certain choices of flux: specifically an energy-conserving
alternating flux as well as two energy-dissipating fluxes. We also present
simulations of soliton solutions of the sine-Gordon equation. We summarize our
results in Section 5 and point out areas for future research.

%!TEX root = dg_nonlinear.tex
\section{Problem formulation}
We consider semilinear wave equations of the form
\begin{equation}\label{general_problem}
 \frac{\partial^2 u}{\partial t^2} + \theta \frac{\partial u}{\partial t}= c^2\Delta u + f(u),\ \ \ \ {\bf{x}}\in\Omega\subset\mathbb{R}^d, \ \ \ \ t \geq 0,
\end{equation}
where $c > 0$ is the sound wave speed, $\theta\geq0$ is the dissipation coefficient, and $f(u)$ is a smooth function with
$\lim_{u\to 0} \frac{f(u)}{u}$ bounded. The initial conditions are given by
\begin{equation*}
    u({\bf x},0) = g_1({\bf x}),\ \ \ \ \frac{\partial u({\bf x},0)}{\partial t} = g_2({\bf x}), \ \ \ \ {\bf x}\in\Omega\subset\mathbb{R}^d.
\end{equation*}
Note that when $\theta = 0$, (\ref{general_problem}) is the Euler-Lagrange equation derived from the Lagrangian
\begin{equation*}
    L = \frac{1}{2}\left(\frac{\partial u}{\partial t}\right)^2-\frac{c^2}{2}\left|\nabla u\right|^2 - F(u),
\end{equation*}
where $F'(u) = -f(u)$. To derive an energy-based DG formulation for problem (\ref{general_problem}), we introduce a second scalar variable
to produce a system which is first order in time,
\begin{equation}\label{reformulation}
    \left\{\begin{array}{l}
    \frac{\partial u}{\partial t} - v = 0,     \\
\frac{\partial{v}}{\partial t}+\theta v -c^2\Delta u - f(u) = 0.
    \end{array}
    \right.
\end{equation}
The energy takes the form
\begin{equation}\label{energy}
    E = \frac{1}{2}\int_{\Omega} v^2 + c^2\left|\nabla u\right|^2 + 2F(u).
\end{equation}
Note that $F(u) > 0$ corresponds to a defocusing equation and $F(u) < 0$ gives a focusing equation. In the rest of analysis in this paper, we investigate the defocusing equation with $F(u) > 0$, although the method formulation applies in either case. The change of the energy is given by boundary contributions and
a volume integral related to the dissipation:
\begin{equation}\label{energy_t}
    \frac{dE}{dt} = -\theta\int_{\Omega} \left(\frac{\partial u}{\partial t}\right)^2 d{\bf x}+ \int_{\partial \Omega} c^2 v \nabla u\cdot {\bf n}dS,
\end{equation}
where ${\bf n}$ is the outward-pointing unit normal.

We note that in our error analysis we will make the stronger assumption $\f {f(u)}{u} < 0$, which can be enforced after a transformation of
variables if we only assume the ratio is bounded. Then the defocusing assumption holds since
\bd
F(u) = - \int_0^u f(z) dz = \int_0^u \left( - \f {f(z)}{z} \right) zdz > 0.
\ed
%!TEX root = dg_nonlinear.tex
\subsection{Semi-discrete DG formulation}\label{sec:semidiscrete}
We develop an energy-based DG scheme for problem (\ref{general_problem}) through the reformulation (\ref{reformulation}). Let the domain $\Omega$ be discretized by non-overlapping elements $\Omega_j$; $\Omega = \cup_j \Omega_j$. Choose the components of the approximations, $(u^h,v^h)$ to $(u,v)$, restricted to $\Omega_j$, to be polynomials
or tensor-product polynomials of degree $q$ and $s$ respectively\footnote{For simplcity we abuse notation and
let $\Pi^r$ denote either the polynomials of degree $r$ or the tensor-product polynomials of degree $r$ in each
coordinate on a reference element.},
\begin{equation*}
    U_h^q = \Big\{u^h({\bf x},t): u^h({\bf x},t)\in \Pi^q(\Omega_j), \ \ {\bf x} \in \Omega_j ,\ \ t\geq0\Big\}, \ \ \ V_h^s = \Big\{v^h({\bf x},t): v^h({\bf x},t)\in \Pi^s(\Omega_j), \ \ {\bf x} \in \Omega_j,\ \ t\geq0\Big\}.
\end{equation*}
We seek an approximation to the system (\ref{reformulation}) which satisfies a discrete energy estimate analogous to (\ref{energy}). Consider a discrete energy in $\Omega_j$,
\begin{equation}\label{discrete_energy1}
    E^h_j(t) = \frac{1}{2}\int_{\Omega_j} \left(v^h\right)^2 + c^2\left|\nabla u^h\right|^2\ d{\bf x} + \sum_{\bf k} \omega_{{\bf k},j}F(u^h({\bf x}_{{\bf k},j})),
\end{equation}
and its time derivative
\begin{equation}\label{discrete_energy_t}
  \frac{dE_j^h(t)}{dt} = \int_{\Omega_j} v^h\frac{\partial v^h}{\partial t} + c^2\nabla u^h \cdot \nabla \frac{\partial u^h}{\partial t}\  d{\bf x} - \sum_{\bf k} \omega_{{\bf k},j}f(u^h({\bf x}_{{\bf k},j}))\frac{\partial u^h}{\partial t}({\bf x}_{{\bf k},j}),
\end{equation}
where we have omitted $t$ in $u^h({\bf x}_{{\bf k},j})$ for simplicity. Note here we use a quadrature rule
with nodes ${\bf x}_{{\bf k},j}$ in $\Omega_j$ and positive weights $\omega_{{\bf k},j}$ to approximate the integration of the nonlinear terms; in our
experiments we use $16$-point Gauss rules.
To obtain a weak form which is compatible with the discrete energy (\ref{discrete_energy1}) and (\ref{discrete_energy_t}), we choose $\phi_u\in U_h^q$, $\phi_v\in V_h^s$ and test the first equation of (\ref{reformulation}) with $-c^2\Delta \phi_u$, the second equation of (\ref{reformulation}) with $\phi_v$ and add terms which vanish for the continuous problem. This yields the following equations,
\begin{equation*}
  \int_{\Omega_j}-c^2\Delta \phi_u \left(\frac{\partial u^h}{\partial t}- v^h\right)\ d {\bf x} = \int_{\partial \Omega_j}c^2\nabla \phi_u\cdot{\bf n} \left(v^{\ast} - \frac{\partial u^h}{\partial t}\right)\ dS + \sum_{\bf k}\omega_{{\bf k},j}\phi_u({\bf x}_{{\bf k},j})\frac{f(u^h({\bf x}_{{\bf k},j}))}{u^h({\bf x}_{{\bf k},j})}\left(\frac{\partial u^h}{\partial t}({\bf x}_{{\bf k},j}) - v^h({\bf x}_{{\bf k},j})\right),
\end{equation*}
\begin{equation*}
\int_{\Omega_j} \phi_v\frac{\partial{v^h}}{\partial t} -c^2\phi_v\Delta u^h + \theta\phi_v v^h\ d{\bf x} - \sum_{\bf k} \omega_{{\bf k},j}\phi_v({\bf x}_{{\bf k},j})f(u^h({\bf x}_{{\bf k},j})) = \int_{\partial \Omega_j}c^2\phi_v\left((\nabla u)^{\ast}\cdot{\bf n}-\nabla u^h\cdot{\bf n}\right)\ dS,
\end{equation*}
where $v^{\ast}$ and $(\nabla u)^{\ast}$ are numerical fluxes on both interelement and physical boundaries. In what follows, we apply integration by parts to obtain an alternative form,
\begin{equation}\label{scheme1}
  \int_{\Omega_j}c^2\nabla \phi_u \cdot \nabla \left(\frac{\partial u^h}{\partial t}- v^h\right)\ d {\bf x} -\sum_{\bf k}\omega_{{\bf k},j}\phi_u({\bf x}_{{\bf k},j})\frac{f(u^h({\bf x}_{{\bf k},j}))}{u^h({\bf x}_{{\bf k},j})}\left(\frac{\partial u^h}{\partial t}({\bf x}_{{\bf k},j}) - v^h({\bf x}_{{\bf k},j})\right) = \int_{\partial \Omega_j}c^2\nabla \phi_u\cdot{\bf n} \left(v^{\ast} - v^h\right)\ dS,
\end{equation}
and
\begin{equation}\label{scheme2}
  \int_{\Omega_j} \phi_v\frac{\partial{v^h}}{\partial t} +c^2\nabla\phi_v\cdot \nabla u^h +\theta \phi_v v^h \ d{\bf x} - \sum_{\bf k} \omega_{{\bf k},j}\phi_v({\bf x}_{{\bf k},j})f(u^h({\bf x}_{{\bf k},j})) = \int_{\partial \Omega_j}c^2\phi_v(\nabla u)^{\ast}\cdot{\bf n} \ dS.
\end{equation}
Now by setting $\phi_u = u^h$ and $\phi_v = v^h$ and recalling (\ref{discrete_energy_t}) we arrive at
\begin{equation*}
    \frac{dE^h}{dt} = \sum_j\frac{dE_j^h}{dt} =-\sum_j
    \int_{\Omega_j}\theta \left(v^h\right)^2\ d{\bf x} + \sum_{j}\int_{\partial\Omega_j} c^2\nabla \phi_u\cdot{\bf n} \left(v^{\ast} - v^h\right) + c^2\phi_v(\nabla u)^{\ast}\cdot{\bf n} \ dS.
\end{equation*}
Note that if $\frac{f(u)}{u} = 0$, then we need to impose an extra equation which determines the mean value of $\frac{\partial u^h}{\partial t}$,
\begin{equation*}
    \int_{\Omega_j} \tilde{\phi}_u\left(\frac{\partial u^h}{\partial t} - v^h\right)d{\bf x} = 0.
\end{equation*}
Here, $\tilde{\phi}_u$ is an arbitrary constant function and this equation does not affect the energy.

The innovation here in comparison with the weak form proposed in \cite{appelo2015new} is the appearance of
$\phi_u \f {f(u^h)}{u^h}$ instead of $f(\phi_u)$ in (\ref{scheme1}). This
exchange obviously yields an invertible linear
system for computing $\f {\pa u^h}{\pa t}$. The energy estimate still holds as the two terms are identical
for the special choice $\phi_u=u^h$.

%!TEX root = dg_nonlinear.tex
\subsection{Fluxes}\label{sec:flux}
To complete the formulation of energy-based DG scheme proposed in Section \ref{sec:semidiscrete}, we must specify the numerical fluxes $v^{\ast}, (\nabla u)^{\ast}$ both at interelement and physical boundaries. Denote $'+'$ to be the trace of data from the outside of the element, $'-'$ to be the trace of data from the inside of the element. Here, we introduce the common notation for averages and jumps,
\begin{equation*}
    \{\{v^h\}\} \equiv \frac{1}{2}(v^h)^{+}+\frac{1}{2}(v^h)^{-}, \ \ \ \ [[v^h]] \equiv (v^h)^{+}{\bf n}^+ + (v^h)^{-}{\bf n}^-,
\end{equation*}
and
\begin{equation*}
    \{\{\nabla u^h\}\} \equiv \frac{1}{2}(\nabla u^h)^{+}+\frac{1}{2}(\nabla u^h)^{-}, \ \ \ \ [[\nabla u^h]] \equiv (\nabla u^h)^{+}\cdot{\bf n}^+ + (\nabla u^h)^{-}\cdot{\bf n}^-.
\end{equation*}

%!TEX root = dg_nonlinear.tex
\subsubsection{Interelement boundaries}
To analyze the problem, we label two elements sharing one interelement boundary by $1$ and $2$. Then, besides
the volume dissipation,if any, their net contribution to the discrete energy $E^h(t)$ is the boundary integral of
\begin{equation}\label{J}
 J = c^2\nabla u_1^h\cdot{\bf n}_1 \left(v^{\ast} - v_1^h\right) + c^2v_1^h(\nabla u)^{\ast}\cdot{\bf n}_1 + c^2\nabla u_2^h\cdot{\bf n}_2 \left(v^{\ast} - v_2^h\right) + c^2v_2^h(\nabla u)^{\ast}\cdot{\bf n}_2.
\end{equation}
We first introduce the so-called \emph{central flux},
\begin{equation}\label{centralflux}
    v^\ast \equiv \frac{1}{2}\left(v_1^h + v_2^h\right), \ \ \ \ (\nabla u)^\ast \equiv \frac{1}{2}\left(\nabla u_1^h + \nabla u_2^h\right).
\end{equation}
Plug this back into (\ref{J}) and use ${\bf n}_1 = -{\bf n}_2$. Then we have
\begin{equation*}
  J = \frac{1}{2}\left(c^2\nabla u_1^h\cdot{\bf n}_1 \left(v_2^h - v_1^h\right) + c^2v_1^h\left(\nabla u_1^h + \nabla u_2^h\right)\cdot{\bf n}_1 + c^2\nabla u_2^h\cdot{\bf n}_2 \left(v_1^h - v_2^h\right) + c^2v_2^h\left(\nabla u_1^h + \nabla u_2^h\right)\cdot{\bf n}_2\right) = 0.
\end{equation*}
Second, we propose an \emph{alternating flux},
\begin{equation}\label{alternating1}
    v^\ast \equiv v_1^h, \ \ \ \ (\nabla u)^\ast \equiv \nabla u_2^h,
\end{equation}
or
\begin{equation}\label{alternating2}
    v^\ast \equiv v_2^h, \ \ \ \ (\nabla u)^\ast \equiv \nabla u_1^h.
\end{equation}
Using (\ref{alternating1}) as an example, we have
\begin{equation*}
  J = c^2\nabla u_1^h\cdot{\bf n}_1 \left(v_1^h - v_1^h\right) + c^2v_1^h\nabla u^h_2\cdot{\bf n}_1 + c^2\nabla u_2^h\cdot{\bf n}_2 \left(v_1^h - v_2^h\right) + c^2v_2^h\nabla u_2^h\cdot{\bf n}_2 = 0.
\end{equation*}
If $\theta = 0$, then it is clear that both the central flux (\ref{centralflux})
and the alternating flux (\ref{alternating1}) or (\ref{alternating2}) lead to an energy-conserving energy-based DG scheme since $J = 0$. To develop an energy-dissipating scheme for $\theta = 0$, we introduce a \emph{Sommerfeld flux} which yields $J < 0$ in the presence of jumps. Let us denote a flux splitting parameter by $\xi > 0$ which has the same units as the wave speed $c$ and note that,
\begin{equation*}
    v\nabla u\cdot {\bf n} = \frac{1}{4\xi}(v + \xi\nabla u\cdot {\bf n})^2 - \frac{1}{4\xi}(v - \xi\nabla u\cdot {\bf n})^2.
\end{equation*}
Then we enforce
\begin{equation}\label{solve_sommerfeld}
   \left\{\begin{array}{l}
    v^\ast - \xi(\nabla u)^\ast\cdot{\bf n}_1 =  v_1^h - \xi\left(\nabla u_1^h\right)\cdot{\bf n}_1,\\
     \\
    v^\ast - \xi(\nabla u)^\ast\cdot{\bf n}_2 =  v_2^h - \xi\left(\nabla u_2^h\right)\cdot{\bf n}_2.
\end{array}
\right.
\end{equation}
Solving system (\ref{solve_sommerfeld}) yields
\begin{equation}\label{Sommerfeld}
    v^\ast = \{\{v^h\}\} - \frac{\xi}{2}[[\nabla u^h]], \ \ \ \ (\nabla u)^\ast = \{\{\nabla u^h\}\} - \frac{1}{2\xi}[[v^h]].
\end{equation}
Plugging (\ref{Sommerfeld}) into (\ref{J}) we obtain
\begin{equation*}
    J = -\left(\frac{\xi c^2}{2} [[\nabla u^h]]^2 + \frac{c^2}{2\xi}\big|[[v^h]]\big|^2\right) < 0.
\end{equation*}
Thus we have an energy-dissipating scheme even when $\theta=0$ if the \emph{Sommerfeld flux} is used.

%!TEX root = dg_nonlinear.tex
\subsubsection{Physical boundaries}
In this section, we focus on the boundary condition,
\begin{equation}\label{simplified_bdry}
    \gamma \frac{\partial u({\bf x},t)}{\partial t} + \eta \nabla u({\bf x},t)\cdot{\bf n} = 0, \ \ \ {\bf x}\in\partial\Omega,
\end{equation}
where $\gamma^2 + \eta^2 = 1$ and $\gamma,\eta\geq 0$. Note that we have a homogeneous Dirichlet boundary condition if $\eta = 0$ and a homogeneous Neumann boundary condition when $\gamma = 0$.
On the one hand, multiplying (\ref{simplified_bdry}) by $\gamma\nabla u\cdot {\bf n}$ gives
\begin{equation}\label{eq1}
    \gamma^2 \frac{\partial u}{\partial t}\nabla u\cdot{\bf n} + \gamma\eta(\nabla u\cdot n)^2 = 0,
\end{equation}
on the other hand, multiplying (\ref{simplified_bdry}) by $\eta \frac{\partial u}{\partial t}$ yields,
\begin{equation}\label{eq2}
    \eta\gamma \big(\frac{\partial u}{\partial t}\big)^2 + \eta^2\frac{\partial u}{\partial t}\nabla u\cdot{\bf n} = 0.
\end{equation}
Combining (\ref{eq1}) and (\ref{eq2}), from (\ref{energy_t}) we have
\begin{equation*}
    \frac{dE}{dt} = -\int_{\Omega} \theta \left(\frac{\partial u}{\partial t}\right)^2d{\bf x}-\int_{\partial\Omega} \gamma\eta\left(\big(\frac{\partial u}{\partial t} \big)^2 + (\nabla u\cdot{\bf n})^2\right)\ dS \leq 0.
\end{equation*}
The numerical fluxes $v^\ast$ and $(\nabla u)^\ast$ are chosen to be consistent with the physical boundary condition (\ref{simplified_bdry}),
\begin{equation*}
    \gamma v^\ast + \eta (\nabla u)^\ast\cdot{\bf n} = 0.
\end{equation*}
By a similar analysis as in \cite{appelo2015new}, we find that if we choose
\begin{equation*}
    v^\ast = v^h - (\gamma - a\eta)\rho, \ \ \ (\nabla u)^\ast = \nabla u^h - (\eta + a\gamma)\rho{\bf n},
\end{equation*}
with
\begin{equation*}
    \rho = \gamma v^h + \eta \left(\nabla u^h\right)\cdot{\bf n} ,
\end{equation*}
then the contribution to the discrete energy from the physical boundaries is given by
\begin{equation*}
\frac{dE^h}{dt}\Bigg|_{\partial \Omega} = -\int_{\partial \Omega}\gamma\eta\Big((v^\ast)^2 + \big((\nabla u)^\ast\cdot{\bf n}\big)^2\Big) + \rho^2\big((1-a^2)\gamma\eta+a(\gamma-\eta)\big),
\end{equation*}
which yields a nonincreasing contribution to the energy if
\begin{equation*}
  b = (1-a^2)\gamma\eta+a(\gamma-\eta) \geq 0.
\end{equation*}

%!TEX root = dg_nonlinear.tex
\subsection{Stability of the scheme}
We are now ready to state the stability of the proposed energy-based DG scheme. To make the statement concise,
we introduce a general formulation for the fluxes on the interelement boundaries,
\begin{equation}\label{flux_general}
    v^\ast \equiv \alpha v_1^h + (1-\alpha) v_2^h - \tau [[\nabla u^h]], \ \ \ \ (\nabla u)^\ast \equiv (1-\alpha) \nabla u_1^h + \alpha \nabla u_2^h - \beta [[v^h]],
\end{equation}
with $0\leq\alpha\leq1$ and $\beta, \tau\geq0$. Note that the previous cases correspond to :

\emph{Central flux} : $\alpha = 0.5, \tau = \beta = 0$.

\emph{Alternating flux} : $\alpha = 0, \tau = \beta = 0$ or $\alpha = 1, \tau = \beta = 0$.

\emph{Sommerfeld flux} : $\alpha = 0.5, \tau = \frac{\xi}{2}, \beta = \frac{1}{2\xi}$.\\
For the general flux formulation (\ref{flux_general}), we find that the contribution to the discrete energy from the interelement
boundaries is the boundary integral of
\begin{equation*}
    J = -\left(\ \beta |[[v^h]]|^2 + \tau[[\nabla u^h]]^2\ \right)\leq0.
\end{equation*}

\begin{theorem}
The discrete energy $E^h(t) = \sum_jE^h_j(t)$ with $E_j^h(t)$ defined in (\ref{discrete_energy1}) satisfies
\begin{equation*}
    \frac{dE^h}{dt} = -\sum_j\int_{\Omega_j} \theta \left(v^h\right)^2d{\bf x}-\sum_j\int_{F_j} \left(\ \beta |[[v^h]]|^2 + \tau[[\nabla u^h]]^2\ \right)\ dS -\int_{\partial \Omega}\gamma\eta\Big((v^\ast)^2 + \big((\nabla u)^\ast\cdot{\bf n}\big)^2\Big) + b\rho^2\ dS .
\end{equation*}
If the flux parameters $\tau,\beta$ and $b$ are non-negative, then $E^h(t)\leq E^h(0)$.
\end{theorem}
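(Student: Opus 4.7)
The argument is essentially an aggregation of the element-wise flux analyses that the preceding subsections have already carried out, so my plan is to organize those pieces rather than to introduce new estimates.

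First I would set $\phi_u = u^h$ in (\ref{scheme1}) and $\phi_v = v^h$ in (\ref{scheme2}). The key observation is that with this choice the nonlinear quadrature term in (\ref{scheme1}) simplifies via $u^h \cdot \f{f(u^h)}{u^h} = f(u^h)$, which matches exactly the structure appearing in (\ref{scheme2}) and in (\ref{discrete_energy_t}). Adding the two equations, the cross term $c^2\nabla u^h\cdot\nabla v^h$ cancels, the two quadrature contributions involving $v^h f(u^h)$ cancel, and the remaining volume integrand on the left is precisely $\f{\pa}{\pa t}\bigl(\f12 (v^h)^2+\f{c^2}{2}|\nabla u^h|^2\bigr) - f(u^h)\f{\pa u^h}{\pa t}$ (at the quadrature nodes in the nonlinear part), which by (\ref{discrete_energy_t}) equals $\f{dE^h_j}{dt}$. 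This leaves the identity
\begin{equation*}
\f{dE^h_j}{dt} + \int_{\Omega_j}\theta(v^h)^2\,d{\bf x} = \int_{\pa\Omega_j} c^2\nabla u^h\cdot{\bf n}\,(v^\ast - v^h) + c^2 v^h\,(\nabla u)^\ast\cdot{\bf n}\,dS.
\end{equation*}

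Next I would sum over all elements $j$. Each interelement face is shared by two elements, and the sum of the two boundary contributions is precisely the quantity $J$ defined in (\ref{J}). For the general flux (\ref{flux_general}), the analysis of Section \ref{sec:flux} has already shown $J = -(\beta|[[v^h]]|^2 + \tau[[\nabla u^h]]^2)$. The contributions from faces lying on the physical boundary $\pa\Omega$ are handled by the calculation in Section \ref{sec:flux}, which yields the boundary term $-\gamma\eta\bigl((v^\ast)^2+((\nabla u)^\ast\cdot{\bf n})^2\bigr) - b\rho^2$. Assembling the volume dissipation, the interior face contributions, and the physical boundary contribution gives the stated formula for $\f{dE^h}{dt}$.

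The conclusion $E^h(t)\le E^h(0)$ then follows by integrating in time, since $\theta\ge 0$ by assumption and the hypothesis $\tau,\beta,b\ge 0$ makes every term on the right-hand side nonpositive. I do not foresee a serious obstacle: the only subtlety worth being careful about is confirming that the nonlinear quadrature terms truly telescope after the test-function choice $\phi_u=u^h$, which is the whole point of having introduced $\phi_u\,\f{f(u^h)}{u^h}$ in (\ref{scheme1}) instead of the $f(\phi_u)$ used in \ci{appelo2015new}. Once that cancellation is verified, the rest is bookkeeping using results already established in Sections \ref{sec:semidiscrete} and \ref{sec:flux}.
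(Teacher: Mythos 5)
Your proposal is correct and follows essentially the same route as the paper: the paper itself obtains the theorem by setting $\phi_u=u^h$, $\phi_v=v^h$ in (\ref{scheme1})--(\ref{scheme2}) (where the choice $u^h\,\f{f(u^h)}{u^h}=f(u^h)$ makes the nonlinear quadrature terms telescope against (\ref{discrete_energy_t})), summing over elements, and then substituting the already-computed interelement contribution $J$ for the general flux (\ref{flux_general}) and the physical-boundary contribution from Section \ref{sec:flux}. The element-wise identity you write down and the final assembly coincide with the paper's derivation, so no further comment is needed.
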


%!TEX root = dg_nonlinear.tex
\section{Error estimates}
To analyze the numerical error of the scheme, we define the errors by
\begin{equation}\label{error_uv}
    e_u = u - u^h, \ \ \ \ e_v = v - v^h,
\end{equation}
and compare $(u^h,v^h)$ with an arbitrary polynomial $(\tilde{u}^h,\tilde{v}^h)$, $\tilde{u}^h\in U_h^q$, $\tilde{v}^h\in V_h^s$ with $q-2\leq s\leq q$. To proceed, we denote the difference
\begin{equation*}
    \tilde{e}_u = \tilde{u}^h - u^h,\ \ \ \ \tilde{e}_v = \tilde{v}^h - v^h,\ \ \ \ \delta_u = \tilde{u}^h - u,\ \ \ \ \delta_v = \tilde{v}^h - v,
\end{equation*}
and the numerical error energy
\begin{eqnarray}
    \mathcal{E} & = & \sum_j\int_{\Omega_j} \frac{1}{2}c^2|\nabla \tilde{e}_u|^2 + \frac{1}{2}\tilde{e}_v^2\ d{\bf x}-\frac{1}{2} \sum_{{\bf k},j}\omega_{{\bf k},j}\frac{f(\tilde{u}^h({\bf x}_{{\bf k},j})}{\tilde{u}^h({\bf x}_{{\bf k},j})}\tilde{e}_u^2({\bf x}_{{\bf k},j}) \label{error_energy} \\ & &
-\sum_{{\bf k},j} \int_0^{\tilde{e}_u({\bf x}_{{\bf k},j})}\omega_{{\bf k},j}\left(\frac{f(\tilde{u}^h({\bf x}_{{\bf k},j})-z)}{\tilde{u}^h({\bf x}_{{\bf k},j})-z}-\frac{f(\tilde{u}^h({\bf x}_{{\bf k},j}))}{\tilde{u}^h({\bf x}_{{\bf k},j})}\right)zdz. \nonumber
\end{eqnarray}
Here we assume $\frac{f(u)}{u} < 0$. However, this restriction can be relaxed as we show in the Remark below.
First we claim that $\mathcal{E}$ is non-negative for small errors. Note that for small $z$
\begin{equation*}
\frac{f(\tilde{u}^h({\bf x}_{{\bf k},j})-z)}{\tilde{u}^h({\bf x}_{{\bf k},j})-z}-\frac{f(\tilde{u}^h({\bf x}_{{\bf k},j}))}{\tilde{u}^h({\bf x}_{{\bf k},j})} = -z\frac{d}{dw}\left(\frac{f(w)}{w}\right)\Bigg|_{w=\tilde{u}^h({\bf x}_{{\bf x},j})-\vartheta z},\ \ \ \ \vartheta \in [0,1] .
\end{equation*}
Then
\begin{equation*}
 \int_0^{\tilde{e}_u({\bf x}_{{\bf k},j})}\omega_{{\bf k},j}\left(\frac{f(\tilde{u}^h({\bf x}_{{\bf k},j})-z)}{\tilde{u}^h({\bf x}_{{\bf k},j})-z}-\frac{f(\tilde{u}^h({\bf x}_{{\bf k},j}))}{\tilde{u}^h({\bf x}_{{\bf k},j})}\right)zdz = \mathcal{O}\left(\tilde{e}^3_u({\bf x}_{{\bf k},j})\right).
\end{equation*}
Thus
\begin{equation*}
  \left|\int_0^{\tilde{e}_u({\bf x}_{{\bf k},j})}\omega_{{\bf k},j}\left(\frac{f(\tilde{u}^h({\bf x}_{{\bf k},j})-z)}{\tilde{u}^h({\bf x}_{{\bf k},j})-z}-\frac{f(\tilde{u}^h({\bf x}_{{\bf k},j}))}{\tilde{u}^h({\bf x}_{{\bf k},j})}\right)zdz\right|<-\frac{1}{2} \sum_{\bf k}\omega_{{\bf k},j}\frac{f(\tilde{u}^h({\bf x}_{{\bf k},j})}{\tilde{u}^h({\bf x}_{{\bf k},j})}\tilde{e}_u^2({\bf x}_{{\bf k},j}),
\end{equation*}
which guarantees the positivity of $\mathcal{E}$. Since both the continuous solution $(u,v)$ and the numerical solution $(u^h,v^h)$ satisfy (\ref{scheme1}) and (\ref{scheme2}), we have
\begin{equation}\label{scheme_e_1}
  \int_{\Omega_j}c^2\nabla \phi_u \cdot \nabla \left(\frac{\partial e_u}{\partial t}- e_v\right)\ d {\bf x} +\sum_{\bf k}\omega_{{\bf k},j}\phi_u({\bf x}_{{\bf k},j})\frac{f(u^h({\bf x}_{{\bf k},j}))}{u^h({\bf x}_{{\bf k},j})}\left(\frac{\partial u^h}{\partial t}({\bf x}_{{\bf k},j}) - v^h({\bf x}_{{\bf k},j})\right) = \int_{\partial \Omega_j}c^2\nabla \phi_u\cdot{\bf n} \left(e_v^{\ast} - e_v\right)\ dS,
\end{equation}
and, using (\ref{error_uv}),
\begin{equation}\label{scheme_e_2}
  \int_{\Omega_j} \phi_v\frac{\partial{e_v}}{\partial t} +c^2\nabla\phi_v\cdot \nabla e_u +\theta \phi_v e_v \ d{\bf x} - \sum_{\bf k} \omega_{{\bf k},j}\phi_v({\bf x}_{{\bf k},j})\left(f(u({\bf x}_{{\bf k},j})-f(u^h({\bf x}_{{\bf k},j}))\right)= \int_{\partial \Omega_j}c^2\phi_v(\nabla e_u)^{\ast}\cdot{\bf n} \ dS .
\end{equation}
Now, by using the relations $e_u = \tilde{e}_u - \delta_u$, $e_v = \tilde{e}_v - \delta_v$, choosing $\phi_u = \tilde{e}_u$, $\phi_v = \tilde{e}_v$ and then summing (\ref{scheme_e_1}) and (\ref{scheme_e_2}), we obtain
\begin{multline}\label{error_es_1}
    \int_{\Omega_j} c^2\nabla\tilde{e}_u\cdot\nabla\frac{\partial\tilde{e}_u}{\partial t} + \tilde{e}_v\frac{\partial\tilde{e}_v}{\partial t}\ d{\bf x} = \int_{\Omega_j} c^2\nabla\tilde{e}_u\cdot\nabla\left(\tilde{e}_v+\left(\frac{\partial\delta_u}{\partial t}-\delta_v\right)\right) + \tilde{e}_v\frac{\partial\delta_v}{\partial t}-c^2\nabla\tilde{e}_v\cdot\nabla(\tilde{e}_u-\delta_u) - \theta\tilde{e}_v(\tilde{e}_v-\delta_v)\ d{\bf x} \\
    -\sum_{\bf k}\omega_{{\bf k},j}\tilde{e}_u({\bf x}_{{\bf k},j})\frac{f(u^h({\bf x}_{{\bf k},j}))}{u^h({\bf x}_{{\bf k},j})}\left(\frac{\partial u^h({\bf x}_{{\bf k},j})}{\partial t}-v^h({\bf x}_{{\bf k},j})\right)-\omega_{{\bf k},j}\tilde{e}_v({\bf x}_{{\bf k},j})\left(f(u({\bf x}_{{\bf k},j}))-f(u^h({\bf x}_{{\bf k},j}))\right)\\
    +\int_{\partial\Omega_j}c^2\nabla\tilde{e}_u\cdot{\bf n}(\tilde{e}_v^\ast-\delta_v^\ast-(\tilde{e}_v-\delta_v))+c^2\tilde{e}_v\left((\nabla\tilde{e}_u)^\ast\cdot{\bf n}-(\nabla\delta_u)^\ast\cdot{\bf n}\right)\ dS.
\end{multline}
An integration by parts in the volume integral $\int_{\Omega_j}c^2\nabla\tilde{e}_u\cdot\nabla\delta_v\ d{\bf x}$ simplifies (\ref{error_es_1}) to
\begin{multline}\label{error_es_2}
  \int_{\Omega_j} c^2\nabla\tilde{e}_u\cdot\nabla\frac{\partial\tilde{e}_u}{\partial t} + \tilde{e}_v\frac{\partial\tilde{e}_v}{\partial t}\ d{\bf x} = \int_{\Omega_j} c^2\nabla\tilde{e}_u\cdot\nabla\frac{\partial\delta_u}{\partial t}+c^2\Delta\tilde{e}_u\delta_v + \tilde{e}_v\frac{\partial\delta_v}{\partial t}+c^2\nabla\tilde{e}_v\cdot\nabla\delta_u- \theta\tilde{e}_v(\tilde{e}_v-\delta_v)\ d{\bf x}  \\
    -\sum_{\bf k}\omega_{{\bf k},j}\tilde{e}_u({\bf x}_{{\bf k},j})\frac{f(u^h({\bf x}_{{\bf k},j}))}{u^h({\bf x}_{{\bf k},j})}\left(\frac{\partial u^h({\bf x}_{{\bf k},j})}{\partial t}-v^h({\bf x}_{{\bf k},j})\right)-\omega_{{\bf k},j}\tilde{e}_v({\bf x}_{{\bf k},j})\left(f(u({\bf x}_{{\bf k},j}))-f(u^h({\bf x}_{{\bf k},j}))\right)\\
    +\int_{\partial\Omega_j}c^2\nabla\tilde{e}_u\cdot{\bf n}(\tilde{e}_v^\ast-\tilde{e}_v)+c^2\tilde{e}_v(\nabla\tilde{e}_u)^\ast\cdot{\bf n}-c^2\nabla\tilde{e}_u\cdot{\bf n}\delta_v^\ast-c^2\tilde{e}_v(\nabla\delta_u)^\ast\cdot{\bf n}\ dS.
\end{multline}
We now must choose $(\tilde{u}^h,\tilde{v}^h)$ to achieve an acceptable error. On $\Omega_j$, we impose for all time $t$ and $\forall\phi_u\in U_h^q, \forall\phi_v\in U_h^s$,
\begin{equation*}
    \int_{\Omega_j}\nabla\phi_u\cdot\nabla\delta_u\ d{\bf x} = 0,\ \ \ \ \int_{\Omega_j} \phi_v\delta_v\ d{\bf x} = 0,\ \ \ \ \int_{\Omega_j}\delta_u\ d{\bf x} = 0,
\end{equation*}
then equation (\ref{error_es_2}) yields
\begin{multline}\label{error_es_3}
  \int_{\Omega_j}  c^2\nabla\tilde{e}_u\cdot\nabla\frac{\partial\tilde{e}_u}{\partial t} + \tilde{e}_v\frac{\partial\tilde{e}_v}{\partial t}\ d{\bf x}
  =\\
-\sum_{\bf k}\omega_{{\bf k},j}\tilde{e}_u({\bf x}_{{\bf k},j})\frac{f(u^h({\bf x}_{{\bf k},j}))}{u^h({\bf x}_{{\bf k},j})}\left(\frac{\partial u^h({\bf x}_{{\bf k},j})}{\partial t}-v^h({\bf x}_{{\bf k},j})\right)
-\omega_{{\bf k},j}\tilde{e}_v({\bf x}_{{\bf k},j})\left(f(u({\bf x}_{{\bf k},j}))-f(u^h({\bf x}_{{\bf k},j}))\right)\\
  -\int_{\Omega_j}\theta\tilde{e}_v^2 \ d{\bf x} +\int_{\partial\Omega_j}c^2\nabla\tilde{e}_u\cdot{\bf n}(\tilde{e}_v^\ast-\tilde{e}_v)
   +c^2\tilde{e}_v(\nabla\tilde{e}_u)^\ast\cdot{\bf n}-c^2\nabla\tilde{e}_u\cdot{\bf n}\delta_v^\ast-c^2\tilde{e}_v(\nabla\delta_u)^\ast\cdot{\bf n}\ dS\\
    = -\int_{\Omega_j}\theta\tilde{e}_v^2 \ d{\bf x}
    +\sum_{\bf k}\omega_{{\bf k},j}\tilde{e}_v({\bf x}_{{\bf k},j})\left(f(u({\bf x}_{{\bf k},j}))-f(u^h({\bf x}_{{\bf k},j}))\right)\\
+\sum_{\bf k}\omega_{{\bf k},j}\tilde{e}_u({\bf x}_{{\bf k},j})\frac{f(u^h({\bf x}_{{\bf k},j}))}{u^h({\bf x}_{{\bf k},j})}\left(\frac{\partial \tilde{e}_u({\bf x}_{{\bf k},j})}{\partial t}-\tilde{e}_v({\bf x}_{{\bf k},j})\right)-\omega_{{\bf k},j}\tilde{e}_u({\bf x}_{{\bf k},j})\frac{f(u^h({\bf x}_{{\bf k},j}))}{u^h({\bf x}_{\bf k}^j)}\left(\frac{\partial \delta_u({\bf x}_{{\bf k},j})}{\partial t}-\delta_v({\bf x}_{{\bf k},j})\right)\\
+\int_{\partial\Omega_j}c^2\nabla\tilde{e}_u\cdot{\bf n}(\tilde{e}_v^\ast-\tilde{e}_v)+c^2\tilde{e}_v(\nabla\tilde{e}_u)^\ast\cdot{\bf n}-c^2\nabla\tilde{e}_u\cdot{\bf n}\delta_v^\ast-c^2\tilde{e}_v(\nabla\delta_u)^\ast\cdot{\bf n}\ dS.
\end{multline}
First, we estimate the third term on the right-hand side of (\ref{error_es_3}),
\begin{multline*}
 \sum_{\bf k}\omega_{{\bf k},j}\tilde{e}_u({\bf x}_{{\bf k},j})\frac{f(u^h({\bf x}_{{\bf k},j}))}{u^h({\bf x}_{{\bf k},j})}\left(\frac{\partial \tilde{e}_u({\bf x}_{{\bf k},j})}{\partial t}-\tilde{e}_v({\bf x}_{{\bf k},j})\right)
 =  \sum_{\bf k}\omega_{{\bf k},j}\left(\frac{f(u^h({\bf x}_{{\bf k},j}))}{u^h({\bf x}_{{\bf k},j})}-\frac{f(\tilde{u}^h({\bf x}_{{\bf k},j}))}{\tilde{u}^h({\bf x}_{{\bf k},j})}\right)\tilde{e}_u({\bf x}_{{\bf k},j})\frac{\partial \tilde{e}_u({\bf x}_{{\bf k},j})}{\partial t} \\
 + \sum_{\bf_k}\omega_{{\bf k},j}\frac{f(\tilde{u}^h({\bf x}_{{\bf k},j}))}{\tilde{u}^h({\bf x}_{{\bf k},j})}\tilde{e}_u({\bf x}_{{\bf k},j})\frac{\partial \tilde{e}_u({\bf x}_{{\bf k},j})}{\partial t}-\omega_{{\bf k},j}\frac{f(u^h({\bf x}_{{\bf k},j}))}{u^h({\bf x}_{{\bf k},j})}\tilde{e}_u({\bf x}_{{\bf k},j})\tilde{e}_v({\bf x}_{{\bf k},j}) \\
 =\frac{d}{dt}\sum_{\bf k}\int_0^{\tilde{e}_u({\bf x}_{{\bf k},j})} \omega_{{\bf k},j}\left(\frac{f(\tilde{u}^h({\bf x}_{{\bf k},j})-z)}{\tilde{u}^h({\bf x}_{{\bf k},j}-z)}-\frac{f(\tilde{u}^h({\bf x}_{{\bf k},j}))}{\tilde{u}^h({\bf x}_{{\bf k},j})}\right)z\ dz
 + \frac{1}{2}\frac{d}{dt}\sum_{\bf k}\omega_{{\bf k},j}\frac{f(\tilde{u}^h({\bf x}_{{\bf k},j}))}{\tilde{u}^h({\bf x}_{{\bf k},j})}\tilde{e}_u^2({\bf x}_{{\bf k},j})\\
  -\sum_{\bf k}\omega_{{\bf k},j}\frac{1}{2}\frac{d}{dt}\left(\frac{f(\tilde{u}^h({\bf x}_{{\bf k},j}))}{\tilde{u}^h({\bf x}_{{\bf k},j})}\right)\tilde{e}_u^2({\bf x}_{{\bf k},j})
 +\omega_{{\bf k},j}\frac{f(u^h({\bf x}_{{\bf k},j}))}{u^h({\bf x}_{{\bf k},j})}\tilde{e}_u({\bf x}_{{\bf k},j})\tilde{e}_v({\bf x}_{{\bf k},j})\\
 -\sum_{\bf k}\int_0^{\tilde{e}_u({\bf x}_{{\bf k},j})} \omega_{{\bf k},j} \frac{d}{dt}\left(\frac{f(\tilde{u}^h({\bf x}_{{\bf k},j})-z)}{\tilde{u}^h({\bf x}_{{\bf k},j}-z)}-\frac{f(\tilde{u}^h({\bf x}_{{\bf k},j}))}{\tilde{u}^h({\bf x}_{{\bf k},j})}\right)z \ dz,
\end{multline*}
then substituting this estimate into (\ref{error_es_3}) and recalling (\ref{error_energy}) we conclude:
\begin{eqnarray}
\frac{d\mathcal{E}}{dt} & = &
  \sum_j\int_{\Omega_j}  c^2\nabla\tilde{e}_u\cdot\nabla\frac{\partial\tilde{e}_u}{\partial t} + \tilde{e}_v\frac{\partial\tilde{e}_v}{\partial t}\ d{\bf x} \nonumber \\ & &
-\sum_j\frac{d}{dt}\sum_{\bf k}\int_0^{\tilde{e}_u({\bf x}_{{\bf k},j})} \omega_{{\bf k},j}\left(\frac{f(\tilde{u}^h({\bf x}_{{\bf k},j})-z)}{\tilde{u}^h({\bf x}_{{\bf k},j}-z)}-\frac{f(\tilde{u}^h({\bf x}_{{\bf k},j}))}{\tilde{u}^h({\bf x}_{{\bf k},j})}\right)z\ dz
 -\sum_j\frac{1}{2}\frac{d}{dt}\sum_{\bf k}\omega_{{\bf k},j}\frac{f(\tilde{u}^h({\bf x}_{{\bf k},j}))}{\tilde{u}^h({\bf x}_{{\bf k},j})}\tilde{e}_u^2({\bf x}_{{\bf k},j}) \nonumber \\
 & = & -\sum_j\int_{\Omega_j}\theta\tilde{e}_v^2 \ d{\bf x} -\sum_{{\bf k},j}\omega_{{\bf k},j}\frac{f(u^h({\bf x}_{{\bf k},j}))}{u^h({\bf x}_{{\bf k},j})}\tilde{e}_u({\bf x}_{{\bf k},j})\tilde{e}_v({\bf x}_{{\bf k},j})
    -\omega_{{\bf k},j}\tilde{e}_v({\bf x}_{{\bf k},j})\left(f(u({\bf x}_{{\bf k},j}))-f(u^h({\bf x}_{{\bf k},j}))\right)  \label{error_es_4} \\ & &
-\sum_{{\bf k},j}\omega_{{\bf k},j}\tilde{e}_u({\bf x}_{{\bf k},j})\frac{f(u^h({\bf x}_{{\bf k},j}))}{u^h({\bf x}_{{\bf k},j})}\left(\frac{\partial \delta_u({\bf x}_{{\bf k},j})}{\partial t}-\delta_v({\bf x}_{{\bf k},j})\right)+\int_0^{\tilde{e}_u({\bf x}_{{\bf k},j})} \omega_{{\bf k},j} \frac{d}{dt}\left(\frac{f(\tilde{u}^h({\bf x}_{{\bf k},j})-z)}{\tilde{u}^h({\bf x}_{{\bf k},j}-z)}-\frac{f(\tilde{u}^h({\bf x}_{{\bf k},j}))}{\tilde{u}^h({\bf x}_{{\bf k},j})}\right)z \ dz \nonumber \\ & &
 -\sum_{{\bf k},j}\omega_{{\bf k},j}\frac{1}{2}\frac{d}{dt}\left(\frac{f(\tilde{u}^h({\bf x}_{{\bf k},j}))}{\tilde{u}^h({\bf x}_{{\bf k},j})}\right)\tilde{e}_u^2({\bf x}_{{\bf k},j}) \nonumber \\ & &
+\sum_j\int_{\partial\Omega_j}c^2\nabla\tilde{e}_u\cdot{\bf n}(\tilde{e}_v^\ast-\tilde{e}_v)+c^2\tilde{e}_v(\nabla\tilde{e}_u)^\ast\cdot{\bf n}-c^2\nabla\tilde{e}_u\cdot{\bf n}\delta_v^\ast-c^2\tilde{e}_v(\nabla\delta_u)^\ast\cdot{\bf n}\ dS. \nonumber
  \end{eqnarray}
  Combining the contributions from neighboring elements in (\ref{error_es_4}) we obtain:
 \begin{eqnarray}
\frac{d\mathcal{E}}{dt}  & = &
  \sum_j\int_{\Omega_j}  c^2\nabla\tilde{e}_u\cdot\nabla\frac{\partial\tilde{e}_u}{\partial t} + \tilde{e}_v\frac{\partial\tilde{e}_v}{\partial t}\ d{\bf x} \nonumber \\ & &
-\sum_j\frac{d}{dt} \sum_{\bf k}\int_0^{\tilde{e}_u({\bf x}_{{\bf k},j})}\omega_{{\bf k},j}\left(\frac{f(\tilde{u}^h({\bf x}_{{\bf k},j})-z)}{\tilde{u}^h({\bf x}_{{\bf k},j}-z)}-\frac{f(\tilde{u}^h({\bf x}_{{\bf k},j}))}{\tilde{u}^h({\bf x}_{{\bf k},j})}\right)z\ dz
 - \sum_j\frac{1}{2}\frac{d}{dt}\sum_{\bf k}\omega_{{\bf k},j}\frac{f(\tilde{u}^h({\bf x}_{{\bf k},j}))}{\tilde{u}^h({\bf x}_{{\bf k},j})}\tilde{e}_u^2({\bf x}_{{\bf k},j}) \nonumber \\
& = & -\sum_j\int_{\Omega_j}\theta\tilde{e}_v^2 \ d{\bf x}-\sum_{{\bf k},j}\omega_{{\bf k},j}\frac{1}{2}\frac{d}{dt}\left(\frac{f(\tilde{u}^h({\bf x}_{{\bf k},j}))}{\tilde{u}^h({\bf x}_{{\bf k},j})}\right)\tilde{e}_u^2({\bf x}_{{\bf k},j})
+\omega_{{\bf k},j}\tilde{e}_u({\bf x}_{{\bf k},j})\frac{f(u^h({\bf x}_{{\bf k},j}))}{u^h({\bf x}_{{\bf k},j})}\left(\frac{\partial \delta_u({\bf x}_{{\bf k},j})}{\partial t}-\delta_v({\bf x}_{{\bf k},j})\right) \nonumber \\ & &
 -\sum_{{\bf k},j}\omega_{{\bf k},j}\frac{f(u^h({\bf x}_{{\bf k},j}))}{u^h({\bf x}_{{\bf k},j})}\tilde{e}_u({\bf x}_{{\bf k},j})\tilde{e}_v({\bf x}_{{\bf k},j})
    -\omega_{{\bf k},j}\tilde{e}_v({\bf x}_{{\bf k},j})\left(f(u({\bf x}_{{\bf k},j}))-f(u^h({\bf x}_{{\bf k},j}))\right) \label{error_es_5} \\ & &
    -\sum_{{\bf k},j} \int_0^{\tilde{e}_u({\bf x}_{{\bf k},j})} \omega_{{\bf k},j} \frac{d}{dt}\left(\frac{f(\tilde{u}^h({\bf x}_{{\bf k},j})-z)}{\tilde{u}^h({\bf x}_{{\bf k},j}-z)}-\frac{f(\tilde{u}^h({\bf x}_{{\bf k},j}))}{\tilde{u}^h({\bf x}_{{\bf k},j})}\right)z \ dz \nonumber \\ & &
    -\sum_j\int_{B_j}\gamma\eta\Big((\tilde{e}_v^\ast)^2 + \big((\nabla \tilde{e}_u)^\ast\cdot{\bf n}\big)^2\Big) + b(\gamma\tilde{e}_v+\eta\nabla\tilde{e}_u\cdot{\bf n})^2+c^2\nabla\tilde{e}_u\cdot{\bf n}\delta_v^\ast+c^2\tilde{e}_v(\nabla\delta_u)^\ast\cdot{\bf n} dS \nonumber \\ & &
     -\sum_j\int_{F_j} \left(\ \beta |[[\tilde{e}_v]]|^2 + \tau[[\nabla \tilde{e}_u]]^2\ \right)- c^2[[\nabla\tilde{e}_u]]\delta_v^\ast+c^2[[\tilde{e}_v]]\cdot(\nabla\delta_u)^\ast\ dS. \nonumber
  \end{eqnarray}
Here, $F_j$ represents interelement boundaries and $B_j$ represents physical boundaries. Besides, we introduce the fluxes $\delta_v^\ast, \nabla\delta_u^\ast$ built from $\delta_v, \nabla\delta_u$ according to the specification in Section \ref{sec:flux}. In what follows, $C$ is a constant independent of the solution and element diameter $h$ for a shape-regular mesh. Denote Sobolev norms by $||\cdot||$ and the associated seminorms by $|\cdot|$. We then have the following error estimate.
\begin{theorem}\label{thm2}
Let $\bar{q} = min(q-1,s)$, $q-1\leq s \leq q$, $\frac{f(u)}{u} \leq -L$, $L >0$ be smooth. Then there exist numbers $C_0, C_1$ depending only on $s,q,\xi,\beta,\tau,b$, the bounds of $\frac{d f(u)}{d u},\frac{f(u)}{u}, \frac{d}{dt}\left(\frac{f(u)}{u}\right)$ and the shape regularity of the mesh, such that for smooth solutions $u,v$, time $T$, and $h$ sufficiently small
\begin{multline}\label{theorem2}
    ||\nabla e_u(\cdot,T)||_{L^2(\Omega)}^2 + ||e_v(\cdot,T)||_{L^2(
    \Omega)}^2\\
\leq C_0e^{C_1 T} \max_{t\leq T}\left[h^{2\zeta}\left(|u(\cdot,t)|_{H^{\bar{q}+2}(\Omega)}+|v(\cdot,t)|_{H^{\bar{q}+1}(\Omega)}\right)
+h^{2(s+1)}\left(\left|\frac{\partial u(\cdot,t)}{\partial t}\right|_{H^{s+1}(\Omega)}^2+|v(\cdot,t)|_{H^{s+1}(\Omega)}^2 + \left|u(\cdot,t)\right|_{H^{s+1}(\Omega)}^2\right)\right] ,
\end{multline}
where
\begin{equation*}
\zeta =
   \left\{\begin{array}{l}
    \bar{q},\ \ \ \ \ \ \ \ \ \ \ \beta,\tau,b\geq0,\\
    \bar{q}+\frac{1}{2},\ \ \ \ \beta,\tau,b>0.
\end{array}
\right.
\end{equation*}
\end{theorem}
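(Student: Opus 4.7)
The plan is to start from the identity (\ref{error_es_5}) for $d\mathcal{E}/dt$ and reduce it to a differential inequality of the form $d\mathcal{E}/dt \leq C_1 \mathcal{E} + R(t)$, where the residual $R(t)$ consists purely of approximation-error terms involving $\delta_u$, $\delta_v$ and $\partial_t\delta_u$. Applying Gr\"onwall with $\mathcal{E}(0)$ controlled by the initial projection error then yields the bound, and the theorem conclusion will follow by a triangle inequality $\nabla e_u = \nabla\tilde e_u - \nabla\delta_u$, $e_v = \tilde e_v - \delta_v$ together with the observation, already established in the excerpt, that for $h$ sufficiently small $\mathcal{E}$ is coercive in $\|\nabla\tilde e_u\|^2 + \|\tilde e_v\|^2$ (the small cubic correction from the nonlinearity is absorbed by the strictly negative quadratic term coming from the assumption $f(u)/u\leq -L<0$).

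For the projection $(\tilde u^h,\tilde v^h)$ I would use the elliptic/$L^2$ projection already specified (the three orthogonality conditions: $\int\nabla\phi_u\cdot\nabla\delta_u=0$, $\int\phi_v\delta_v=0$, $\int\delta_u=0$), for which standard approximation theory on shape-regular meshes gives $|\delta_u|_{H^1}\leq Ch^{q}|u|_{H^{q+1}}$, $\|\delta_v\|_{L^2}\leq Ch^{s+1}|v|_{H^{s+1}}$, together with the commutator estimate $|\partial_t\delta_u|_{H^1}\leq Ch^{q}|\partial_t u|_{H^{q+1}}$; quadrature node values are controlled by the Sobolev embedding / discrete trace inequality $\|w\|_{L^\infty(\Omega_j)}\leq Ch^{-d/2}\|w\|_{L^2(\Omega_j)}$ applied to polynomials, and boundary traces by the inverse trace estimate $\|w\|_{L^2(\partial\Omega_j)}\leq Ch^{-1/2}\|w\|_{L^2(\Omega_j)}$. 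The volume and quadrature terms in (\ref{error_es_5}) are then bounded by $C(\|\tilde e_u\|_{H^1}+\|\tilde e_v\|_{L^2})\cdot(\text{approximation error})$ using boundedness of $f(u^h)/u^h$, Lipschitz continuity of $f$, and boundedness of $\tfrac{d}{dt}(f/u)$ (which multiplies $\tilde e_u^2$ and thus contributes directly to $C_1\mathcal{E}$); Cauchy--Schwarz together with $2ab\leq\varepsilon a^2+\varepsilon^{-1}b^2$ splits each bilinear term into an $\mathcal{E}$-part and a residual part.

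The most delicate step is the treatment of the face integrals $\int_{F_j}c^2[[\nabla\tilde e_u]]\delta_v^\ast - c^2[[\tilde e_v]]\cdot(\nabla\delta_u)^\ast\,dS$ and their physical-boundary analogues. This is where the two regimes $\zeta=\bar q$ vs $\zeta=\bar q+\tfrac12$ must be separated. In the purely conservative/central/alternating case ($\beta=\tau=0$), I would not have any jump-squared quantity on the left to absorb these face terms, so the best one can do is the trace inequality and an inverse estimate bounding $[[\nabla\tilde e_u]]$ by $h^{-1/2}\|\nabla\tilde e_u\|_{L^2}$ and likewise for $[[\tilde e_v]]$, paired with $\|\delta_v^\ast\|_{L^2(F)}\leq Ch^{s+1/2}|v|_{H^{s+1}}$, which costs one factor of $h^{-1/2}$ and gives the suboptimal rate $h^{\bar q}$. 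In the strictly dissipative case ($\beta,\tau>0$, and $b>0$ on $\partial\Omega$), the identity provides the nonnegative face dissipations $\beta|[[\tilde e_v]]|^2+\tau|[[\nabla\tilde e_u]]|^2$ on the left; Cauchy--Schwarz then absorbs one copy of the jumps and leaves only $C(\beta^{-1}+\tau^{-1})$ times $\|\delta_v^\ast\|_{L^2(F)}^2+\|(\nabla\delta_u)^\ast\|_{L^2(F)}^2\leq Ch^{2\bar q+1}$, recovering the extra half power.

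Once all the right-hand-side contributions are bounded by $C_1\mathcal{E}+C_0\,h^{2\zeta}(\ldots)+C_0 h^{2(s+1)}(\ldots)$, Gr\"onwall's inequality on $[0,T]$ yields the stated estimate. The expected main obstacle is the careful, book-keeping-heavy management of the nonlinear quadrature terms---in particular verifying that the telescoped $d/dt$ corrections appearing in (\ref{error_es_5}) cancel exactly against $d\mathcal{E}/dt$ up to terms of the form $\tfrac12\tfrac{d}{dt}(f(\tilde u^h)/\tilde u^h)\tilde e_u^2$ and residuals controllable by $\mathcal{E}$ via the assumed boundedness of $(f/u)'$ and $d(f/u)/dt$---and simultaneously keeping track of the $h$-scaling in the flux-residual terms so that the dichotomy for $\zeta$ emerges cleanly from the presence or absence of the face dissipation.
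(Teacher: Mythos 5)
Your proposal follows essentially the same route as the paper: the same elliptic/$L^2$ projection with the three orthogonality conditions, Bramble--Hilbert approximation bounds, Cauchy--Schwarz/Young splitting of the nonlinear quadrature terms into an $\mathcal{E}$-part plus residuals, the identical dichotomy for the face terms (trace plus inverse estimates costing $h^{-1/2}$ when $\beta=0$ or $\tau=0$, versus absorption into the face dissipation when $\beta,\tau,b>0$), and a Gr\"onwall-type integration in time followed by the triangle inequality. The only cosmetic difference is that the paper integrates the inequality in the form $\frac{d\mathcal{E}}{dt}\leq C\mathcal{E}+Ch^{\zeta}\sqrt{\mathcal{E}}(\cdots)$ directly to bound $\sqrt{\mathcal{E}}(T)$, whereas you apply Young's inequality first; these are equivalent up to constants.
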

\begin{proof}
  From the Bramble-Hilbert lemma (e.g., \cite{ciarlet2002finite}), we have for $\bar{q} = \min(q,s-1)$
  \begin{align}
       \left|\left|\delta_u\right|\right|_{L^2(\Omega)}^2 \leq Ch^{2s+2}\left|u(\cdot,t)\right|_{H^{s+1}(\Omega)}^2,\ \ \ \
 \left|\left|\delta_v\right|\right|_{L^2(\Omega)}^2 \leq Ch^{2s+2}\left|v(\cdot,t)\right|_{H^{s+1}(\Omega)}^2,\ \ \ \
      \left|\left|\frac{\partial\delta_u}{\partial t}\right|\right|_{L^2(\Omega)}^2 \leq Ch^{2s+2}\left|\frac{\partial u(\cdot,t)}{\partial t}\right|_{H^{s+1}(\Omega)}^2.\label{ineq}
  \end{align}
  Now we estimate the nonlinear volume integrals in (\ref{error_es_5}).  By the Cauchy-Schwartz inequality, the Cauchy inequality and (\ref{ineq}) we obtain:
\begin{multline}\label{nlest1}
 -\sum_{{\bf k},j}\omega_{{\bf k},j}\frac{1}{2}\frac{d}{dt}\left(\frac{f(\tilde{u}^h({\bf x}_{{\bf k},j}))}{\tilde{u}^h({\bf x}_{{\bf k},j})}\right)\tilde{e}_u^2({\bf x}_{{\bf k},j})
 +\omega_{{\bf k},j}\frac{f(u^h({\bf x}_{{\bf k},j}))}{u^h({\bf x}_{{\bf k},j})}\tilde{e}_u({\bf x}_{{\bf k},j})\tilde{e}_v({\bf x}_{{\bf k},j})+\omega_{{\bf k},j}\tilde{e}_u({\bf x}_{{\bf k},j})\frac{f(u^h({\bf x}_{{\bf k},j}))}{u^h({\bf x}_{{\bf k},j})}\left(\frac{\partial \delta_u({\bf x}_{{\bf k},j})}{\partial t}-\delta_v({\bf x}_{{\bf k},j})\right) \\
 -\sum_{{\bf k},j}\int_0^{\tilde{e}_u({\bf x}_{{\bf k},j})} \omega_{{\bf k},j} \frac{d}{dt}\left(\frac{f(\tilde{u}^h({\bf x}_{{\bf k},j})-z)}{\tilde{u}^h({\bf x}_{{\bf k},j}-z)}-\frac{f(\tilde{u}^h({\bf x}_{{\bf k},j}))}{\tilde{u}^h({\bf x}_{{\bf k},j})}\right)z \ dz  \\
 \leq C \mathcal{E} + C h^{s+1}\sqrt{\mathcal{E}}\left(\left|\frac{\partial u(\cdot,t)}{\partial t}\right|_{H^{s+1}(\Omega)} +|v(\cdot,t)|_{H^{s+1}(\Omega)} \right),
\end{multline}
and
\begin{multline}\label{nlest2}
   \sum_{{\bf k},j} \omega_{{\bf k},j}\tilde{e}_v({\bf x}_{{\bf k},j})\left(f(u({\bf x}_{{\bf k},j}))-f(u^h({\bf x}_{{\bf k},j}))\right) \leq \sum_{{\bf k},j}C\omega_{{\bf k},j}\left|\tilde{e}_v({\bf x}_{{\bf k},j})\right|\left|u({\bf x}_{{\bf k},j})-u^h({\bf x}_{{\bf k},j})\right| \\
   \leq \sum_{{\bf k},j} C\omega_{{\bf k},j}\left|\tilde{e}_v({\bf x}_{{\bf k},j})\right|\left(\left|\tilde{e}_u({\bf x}_{{\bf k},j})\right|+\left|\delta_u({\bf x}_{{\bf k},j})\right|\right)\leq C \mathcal{E} + Ch^{s+1}\sqrt{\mathcal{E}}\left|u(\cdot,t)\right|_{H^{s+1}(\Omega)} .
\end{multline}
Then, using (\ref{nlest1})-(\ref{nlest2}), (\ref{error_es_5}) is simplified to
\begin{multline*}
    \frac{d\mathcal{E}}{dt} \leq C\mathcal{E} + Ch^{s+1}\sqrt{\mathcal{E}}\left(\left|\frac{\partial u(\cdot,t)}{\partial t}\right|_{H^{s+1}(\Omega)} +|v(\cdot,t)|_{H^{s+1}(\Omega)} + \left|u(\cdot,t)\right|_{H^{s+1}(\Omega)} \right)\\
- \sum_j\int_{B_j} c^2\nabla\tilde{e}_u\cdot{\bf n}\delta_v^\ast+c^2\tilde{e}_v(\nabla\delta_u)^\ast\cdot{\bf n} +\gamma\eta\Big((\tilde{e}_v^\ast)^2 + \big((\nabla \tilde{e}_u)^\ast\cdot{\bf n}\big)^2\Big) +  b(\gamma\tilde{e}_v+\eta\nabla\tilde{e}_u\cdot{\bf n})^2 dS \\
     -\sum_j\int_{F_j} \left(\ \beta |[[\tilde{e}_v]]|^2 + \tau[[\nabla \tilde{e}_u]]^2\ \right)- c^2[[\nabla\tilde{e}_u]]\delta_v^\ast+c^2[[\tilde{e}_v]]\cdot(\nabla\delta_u)^\ast\ dS .
\end{multline*}
Now, we only need to consider the boundary integrals. We use the same analysis as in \cite{appelo2015new}
and complete the estimates for the follwoing cases:

\emph{\textbf{Case I:}} $\beta = 0$ or $\tau = 0$,
\begin{multline}\label{theorem_2_pre_1}
   \frac{d\mathcal{E}}{dt}
     \leq C\mathcal{E} + Ch^{s+1}\sqrt{\mathcal{E}}\left(\left|\frac{\partial u(\cdot,t)}{\partial t}\right|_{H^{s+1}(\Omega)} +|v(\cdot,t)|_{H^{s+1}(\Omega)} + \left|u(\cdot,t)\right|_{H^{s+1}(\Omega)} \right)+Ch^{\bar{q}}\sqrt{\mathcal{E}}\left(|u(\cdot,t)|_{H^{\bar{q}+2}(\Omega)}+|v(\cdot,t)|_{H^{\bar{q}+1}(\Omega)}\right).
\end{multline}
Then, combining a direct integration of (\ref{theorem_2_pre_1}) in time with the assumption $\tilde{e}_u = \tilde{e}_v = 0$ at $t = 0$, we obtain
\begin{equation}\label{theorem2_pre}
    \sqrt{\mathcal{E}}(T)\leq C \left( e^{CT}-1 \right) \max_{t\leq T} \left(h^{\bar{q}}\left(|u(\cdot,t)|_{H^{\bar{q}+2}(\Omega)}+|v(\cdot,t)|_{H^{\bar{q}+1}(\Omega)}\right)+h^{s+1}\left(\left|\frac{\partial u(\cdot,t)}{\partial t}\right|_{H^{s+1}(\Omega)} +|v(\cdot,t)|_{H^{s+1}(\Omega)} + \left|u(\cdot,t)\right|_{H^{s+1}(\Omega)} \right)\right),
\end{equation}
since $\tilde{e}_u = e_u + \delta_u$, $\tilde{e}_v = e_v + \delta_v$, then (\ref{theorem2}) follows from the triangle inequality and  (\ref{theorem2_pre}).

\emph{\textbf{Case II:}} $\beta,\tau, b > 0$,
 \begin{multline}\label{theorem_2_pre_2}
   \frac{d\mathcal{E}}{dt}
     \leq C\mathcal{E} + Ch^{s+1}\sqrt{\mathcal{E}}\left(\left|\frac{\partial u(\cdot,t)}{\partial t}\right|_{H^{s+1}(\Omega)} +|v(\cdot,t)|_{H^{s+1}(\Omega)} + \left|u(\cdot,t)\right|_{H^{s+1}(\Omega)} \right)+Ch^{\bar{q}+1/2}\sqrt{\mathcal{E}}\left(|u(\cdot,t)|_{H^{\bar{q}+2}(\Omega)}+|v(\cdot,t)|_{H^{\bar{q}+1}(\Omega)}\right).
\end{multline}
Then again (\ref{theorem2}) with $\zeta = \bar{q}+\frac{1}{2}$ follows directly from an integration in time of (\ref{theorem_2_pre_2}) combined with the triangle inequality.
\end{proof}
\par\noindent
\emph{\textbf{Remark}}: If $\frac{f(u)}{u} \geq 0$ for some $u$ we may introduce a new variable $u = e^{\alpha t}w$, $\alpha > 0$ and use
the energy-based DG scheme to solve for $w$.  Then so long as $\alpha^2 + \alpha\theta - \frac{f(u)}{u}$ is positive the hypotheses above are satisfied and so the energy and error estimates hold. This applies, for example, to the sine-Gordon equation. However, in our numerical experiments we solve for $u$ rather than $w$.
\par\noindent
\emph{\textbf{Remark}}: For 1-dimensional problems, we can improve the error estimate to $h^{s+1}$ by constructing $(\tilde{u}^h,\tilde{v}^h)$ to make the boundary term in (\ref{error_es_5}) vanish as in \cite{appelo2015new}.
\par\noindent
\emph{\textbf{Remark}}: We note that the error estimate appears to be overly conservative for the problems that we consider in the numerical experiments section. There we do not observe worse than linear growth of the error in time.

%!TEX root = dg_nonlinear.tex
\section{Numerical experiments}
In this section we present numerical experiments to evaluate the performance of our scheme. In all cases we use a standard modal formulation and use the $L^2$ norm in space to evaluate the error. We present the numerical experiments in both one and two dimensions. For two-dimensional problems, we consider a simple square domain and use the tensor product of the Legendre polynomials to be the basis functions. All the numerical experiments are marched in time by a fourth-order Runge-Kutta (RK4) scheme and the flux splitting parameter is chosen to be $\xi = c = 1$. In all experiments we choose the time step size sufficiently small to guarantee that the temporal error is dominated by the spatial error.

%!TEX root = dg_nonlinear.tex
\subsection{Convergence in $1$D}\label{convergence_rate}
In this section we consider the sine-Gordon equation with a dissipating term, i.e., $\theta = 1$. Particularly, to investigate the order of convergence of our method, we solve the problem
\begin{equation*}
    \frac{\partial^2 u}{\partial t^2} + \frac{\partial u}{\partial t} = c^2\frac{\partial^2 u}{\partial x^2} - \sin(u) + f(x,t), \ \ \ \ x\in(-20,20),\ \ \ \ t\geq0,
\end{equation*}
with a standing breather solution
\begin{equation}\label{breather}
\mbox{solution 1:}\ \ \ \ u(x,t) = 4 \arctan \frac{\sqrt{0.75}\cos(0.5t)}{0.5\cosh(\sqrt{0.75}x)}, \ \ \ \ \ x\in(-20,20),\ \ \ \ \ t\geq0.
\end{equation}
The initial conditions, boundary conditions and the external forcing $f(x,t)$ are chosen so that (\ref{breather}) is the exact solution. We note that our theoretical results establish convergence in the
energy norm, but here we investigate the convergence of the solution itself.

As seen below, in our simulations we find the convergence rate for low degrees $q\leq 3$ is not regular for some cases, so for comparison we also give the results for the manufactured solution
\begin{equation}\label{manufactured}
    \mbox{solution 2:}\ \ \ \   u(x,t) = e^{\sin(x-t)},\ \ \ \ x\in(-20,20),\ \ \ \ \ t\geq0.
\end{equation}
The corresponding initial conditions, boundary conditions and external forcing are determined by the manufactured solution (\ref{manufactured}). For these two examples, we use the same space and time discretization, the only difference is the the solution itself.

The discretization is performed on the computational domain $(-20,20)$ with the element vertices $x_j = -20+ (j-1)h$, $j = 1,2,\cdots,N+1$, $h = \frac{40}{N}$. We evolve the discretized problems until the final time $T = 2$ with the time step $\Delta t = 0.075 h/(2\pi)$. We present the $L^2$ error for $u$. The degrees of the approximation space for $u^h$ are set to be $q = (1,2,3,4,5,6)$.

We test four different fluxes: the central flux denoted by C.-flux, the alternating flux with $\alpha = 0$ denoted by A.-flux, the Sommerfeld flux denoted by S.-flux, and the upwind flux in (\ref{flux_general}) with $\alpha = 0$,  $\tau = \frac{\xi}{2}$, $\beta = \frac{1}{2\xi}$ denoted by A.S.-flux. Note that both the C.-flux and A.-flux are energy-conserving methods; both the S.-flux and A.S.-flux are energy-dissipating methods even when $\theta = 0$. We want to point out that $\alpha = 1$ has a similar performance to $\alpha = 0$ in the cases A.-flux and A.S.-flux; thus we only show the results for $\alpha = 0$ in the rest of the paper. We also consider two different approximation spaces: either $u^h$ and $v^h$ in the same space, i.e., $s = q$, or the degree of the approximation space of $v^h$ one less than $u^h$, i.e., $s = q-1$.

In experiments not shown, we observed that the convergence rate was somewhat irregular for all cases when $L^2$ projection was used to compute the initial conditions. One may use a special projection for the initial conditions to solve this problem; see for example the approach in \cite{chou2014optimal} which discusses a projection for the local DG method with the alternating flux. But here, we adopt a simpler idea as in \cite{zhang2019energy}: transform the problem into one with zero initial conditions,
\begin{equation*}
    u(x,t) = u_0(x) + \tilde{u}(x,t),
\end{equation*}
where $u_0(x)=u(x,0)$. Then we get $u$ by numerically solving for $\tilde{u}(x,t)$.

The $L^2$ errors for $u$ and both problems one and two are presented in Tables \ref{convergence_rate_1d_s} through
\ref{convergence_rate_1d_c_con}.
For the energy-dissipating schemes we observe that the convergence rate for $u^h$ is predictable for both problems,
optimal convergence independent of the degree of $v^h$. For the energy-conserving schemes we see that the behavior of the convergence rate
for $u^h$ is predictable when $q \geq 4$ for both problems with the A.-flux, optimal convergence for both $s = q$ and $s = q-1$.
For the central flux we note suboptimal convergence for $u^h$ when $s = q-1, q \geq 3$ and optimal convergence in $u^h$
when $s = q, q \geq 3$. For the lower order schemes ($q \leq 3$) the behavior of the convergence rate for problem 1 is unpredictable for
all fluxes and approximation spaces. Generally speaking, the error levels for both problems with all fluxes are comparable, but the rates of convergence are more predictable for dissipating fluxes with high order approximation ($q \geq 3$).

%!TEX root = dg_nonlinear.tex
\begin{table}
\footnotesize
\caption{\scriptsize{$L^2$ errors in $u$ for problem 1 (\ref{breather}) when the S.-flux is used. $q$ is the degree of
$u^h$, $s$ is the degree of $v^h$ and $N$ is the number of the cells with uniform mesh size $h = 40/N$.}}
\begin{center}
\scalebox{0.92}{
  \begin{tabular}{|c|c|c| c c c c c c|}
  \hline
  $q$&$s$&\diagbox{error}{$N$}&400&800&1200&1600&2000&2400\\
  \hline
    \multirow{2}{*}{1} &{0} &\multirow{2}{*}{$||u-u^h||_{L^2}$} &8.80e-03(--) &6.70e-03(0.39) &2.95e-03(2.02) &2.23e-03(0.99) &1.78e-03(0.99) &1.49e-03(0.99)\\
    \cline{2-2}
    ~ &{1} &~ &1.09e-02(--) &5.35e-03(1.03) &3.63e-03(0.96) &2.74e-03(0.97) &2.20e-03(0.98) &1.84e-03(0.98)\\
  \hline
  \multirow{2}{*}{2} &{1} &\multirow{2}{*}{$||u-u^h||_{L^2}$} &4.75e-05(--) &1.03e-03(-4.44) &1.86e-05(9.90) &2.55e-05(-1.09) &1.40e-06(13.03) &2.21e-06(-2.51)\\
    \cline{2-2}
    ~ &{2} &~ &4.85e-05(--) &6.20e-05(-0.35) &1.85e-05(2.98) &2.40e-05(-0.89) &1.39e-06(12.76) &2.06e-05(-2.17)\\
  \hline
   $q$&$s$&\diagbox{error}{$N$}&50&100&200&400&800&1600\\
  \hline
  \multirow{2}{*}{3} &{2} &\multirow{2}{*}{$||u-u^h||_{L^2}$} &1.91e-02(--) &4.23e-05(8.81) &2.49e-06(4.09) &1.70e-07(3.87) &8.77e-09(4.27) &5.41e-10(4.02)\\
    \cline{2-2}
    ~ &{3} &~ &9.81e-03(--) &3.52e-05(8.12) &2.06e-06(4.09) &1.48e-07(3.80) &7.12e-09(4.38) &4.40e-10(4.02)\\
  \hline
  $q$&$s$&\diagbox{error}{$N$}&80&100&120&140&160&180\\
  \hline
    \multirow{2}{*}{4} &{3} &\multirow{2}{*}{$||u-u^h||_{L^2}$} &5.14e-06(--) &1.66e-06(5.07) &6.58e-07(5.07) &3.02e-07(5.06) &1.54e-07(5.06) &8.47e-08(5.05) \\
    \cline{2-2}
    ~ &{4} &~ &4.12e-06(--) &1.34e-06(5.05) &5.32e-07(5.05) &2.44e-07(5.05) &1.25e-07(5.04) &6.89e-08(5.04)\\
  \hline
  \multirow{2}{*}{5} &{4} &\multirow{2}{*}{$||u-u^h||_{L^2}$} &2.76e-07(--) &7.11e-08(6.08) &2.36e-08(6.04) &9.31e-09(6.04) &4.16e-09(6.04) &2.04e-09(6.03)\\
    \cline{2-2}
    ~ &{5} &~  &2.22e-07(--) &5.72e-08(6.07) &1.91e-08(6.02) &7.54e-09(6.02) &3.37e-09(6.02) &1.66e-09(6.02)\\
  \hline
  \multirow{2}{*}{6} &{5} &\multirow{2}{*}{$||u-u^h||_{L^2}$} &1.45e-08(--) &3.14e-09(6.86) &8.71e-10(7.03) &2.95e-10(7.02) &1.16e-10(7.02) &5.06e-11(7.02)\\
    \cline{2-2}
    ~ &{6} &~ &1.18e-08(--) &2.59e-09(6.80) &7.22e-10(7.01) &2.45e-10(7.01) &9.62e-11(7.01) &4.22e-11(7.01)\\
  \hline
\end{tabular}
}
\end{center}
\label{convergence_rate_1d_s}
\end{table}

\begin{table}
\footnotesize
\caption{\scriptsize{$L^2$ errors in $u$ for problem 2 (\ref{manufactured}) when the S.-flux is used. $q$ is the degree of $u^h$, $s$ is the degree of $v^h$ and $N$ is the number of the cells with uniform mesh size $h = 40/N$.}}
\begin{center}
\scalebox{0.92}{
  \begin{tabular}{|c|c|c| c c c c c c|}
  \hline
  $q$&$s$&\diagbox{error}{$N$}&400&800&1200&1600&2000&2400\\
  \hline
    \multirow{2}{*}{1} &{0} &\multirow{2}{*}{$||u-u^h||_{L^2}$} &5.22e-01(--) &2.76e-01(0.92) &1.88e-01(0.95) &1.42e-01(0.98) &1.14e-01(0.98) &9.57e-02(0.98) \\
    \cline{2-2}
    ~ &{1} &~ &6.65e-01(--) &4.18e-01(0.67) &3.23e-01(0.64) &2.71e-01(0.61) &2.38e-01(0.58) &2.14e-01(0.58)\\
  \hline
  \multirow{2}{*}{2} &{1} &\multirow{2}{*}{$||u-u^h||_{L^2}$} &8.25e-04(--) &1.04e-04(2.99) &3.09e-05(2.99) &1.30e-05(3.01) &6.68e-06(2.98) &3.87e-06(3.00)\\
    \cline{2-2}
    ~&{2} &~ &7.08e-04(--) &8.97e-05(2.98) &2.67e-05(2.99) &1.13e-05(2.99) &5.82e-06(2.97) &3.38e-06(2.98)\\
  \hline
  $q$&$s$&\diagbox{error}{$N$}&50&100&200&400&800&1600\\
  \hline
  \multirow{2}{*}{3} &{2} &\multirow{2}{*}{$||u-u^h||_{L^2}$} &1.42e-02(--) &6.62e-04(4.42) &3.33e-05(4.31) &1.92e-06(4.12) &1.17e-07(4.03) &7.26e-09(4.01)\\
    \cline{2-2}
    ~ &{3} &~ &1.09e-02(--) &4.81e-04(4.50) &2.24e-05(4.43) &1.21e-06(4.21) &7.19e-08(4.08) & 4.41e-09(4.03)\\
  \hline
  $q$&$s$&\diagbox{error}{$N$}&80&100&120&140&160&180\\
  \hline
    \multirow{2}{*}{4} &{3} &\multirow{2}{*}{$||u-u^h||_{L^2}$} &6.40e-05(--) &2.07e-05(5.06) &8.26e-06(5.03) &3.81e-06(5.01) &1.95e-06(5.00) &1.08e-06(5.00)\\
    \cline{2-2}
    ~ &{4} &~ &4.35e-05(--) &1.39e-05(5.11) &5.50e-06(5.09) &2.52e-06(5.08) &1.28e-06(5.07) &7.05e-07(5.06)\\
  \hline
  \multirow{2}{*}{5} &{4} &\multirow{2}{*}{$||u-u^h||_{L^2}$} &3.41e-06(--) &8.90e-07(6.02) &2.97e-07(6.01) &1.18e-07(6.01) &5.28e-08(6.01) &2.60e-08(6.00)\\
    \cline{2-2}
    ~ &{5} &~ &2.23e-06(--) &5.77e-07(6.06) &1.92e-07(6.04) &7.56e-08(6.03) &3.38e-08(6.02) &1.67e-08(6.02)\\
    \hline
  \multirow{2}{*}{6} &{5} &\multirow{2}{*}{$||u-u^h||_{L^2}$} &1.95e-07(--) &4.09e-08(6.99) &1.14e-08(7.00) &3.88e-09(7.00) &1.52e-09(7.01) &6.66e-10(7.01)\\
    \cline{2-2}
    ~ &{6} &~ &1.31e-07(--) &2.76e-08(6.98) &7.71e-09(7.00) &2.62e-09(7.01) &1.03e-09(7.02) &4.49e-10(7.02)\\
  \hline
\end{tabular}
}
\end{center}
\label{convergence_rate_1d_s_con}
\end{table}

\begin{table}
\footnotesize
\caption{\scriptsize{$L^2$ errors in $u$ for problem 1 (\ref{breather}) when the A.S.-flux is used. $q$ is the degree of $u^h$, $s$ is the degree of $v^h$ and $N$ is the number of the cells with uniform mesh size $h = 40/N$.}}
\begin{center}
\scalebox{0.92}{
  \begin{tabular}{|c|c|c| c c c c c c|}
  \hline
  $q$&$s$&\diagbox{error}{$N$}&400&800&1200&1600&2000&2400\\
  \hline
    \multirow{2}{*}{1} &{0} &\multirow{2}{*}{$||u-u^h||_{L^2}$} &8.84e-03 (--) &4.32e-03(1.03) &2.92e-03(0.97) &2.20e-03(0.98) &1.77e-03(0.98) &1.48e-03(0.98)\\
    \cline{2-2}
    ~ &{1} &~ &9.14e-02(--) &1.63e-02(2.49) &6.90e-03(2.12) &1.02e-02(-1.37) &4.34e-03(3.85) &3.64e-03(0.97)\\
  \hline
  \multirow{2}{*}{2} &{1} &\multirow{2}{*}{$||u-u^h||_{L^2}$} &4.32e-05(--) &3.64e-05(0.25) &6.45e-06(4.27) &1.07e-05(-1.75) &1.07e-06(10.33) &9.55e-07(0.60)\\
    \cline{2-2}
    ~&{2} &~ &4.69e-05(--) &5.12e-05(-0.13) &1.81e-05(2.56) &2.41e-05(-0.99) &1.39e-06(12.80) &2.03e-06(-2.09)\\
  \hline
  $q$&$s$&\diagbox{error}{$N$}&50&100&200&400&800&1600\\
  \hline
  \multirow{2}{*}{3} &{2} &\multirow{2}{*}{$||u-u^h||_{L^2}$} &7.43e-02(--) &5.89e-05(10.30) &3.94e-06(3.90) &2.54e-07(3.95) &1.48e-08(4.10) &9.18e-10(4.01)\\
    \cline{2-2}
    ~ &{3} &~ &1.34e-02(--) &4.22e-05(8.31) &2.54e-06(4.05) &2.16e-07(3.56) &7.42e-09(4.86) &4.48e-10(4.05)\\
  \hline
  $q$&$s$&\diagbox{error}{$N$}&80&100&120&140&160&180\\
  \hline
    \multirow{2}{*}{4} &{3} &\multirow{2}{*}{$||u-u^h||_{L^2}$} &5.49e-06(--) &1.75e-06(5.13) &6.90e-07(5.10) &3.16e-07(5.07) &1.61e-07(5.05) &8.88e-08(5.04)\\
    \cline{2-2}
    ~ &{4} &~ &4.26e-06(--) &1.36e-06(5.11) &5.38e-07(5.09) &2.46e-07(5.07) &1.25e-07(5.06) &6.91e-08(5.05)\\
  \hline
  \multirow{2}{*}{5} &{4} &\multirow{2}{*}{$||u-u^h||_{L^2}$} &3.70e-07(--) &1.01e-07(5.79) &3.51e-08(5.82) &1.42e-08(5.87) &6.46e-09(5.90) &3.21e-09(5.93)\\
    \cline{2-2}
    ~ &{5} &~ &2.31e-07(--) &5.95e-08(6.07) &1.98e-08(6.04) &7.79e-09(6.04) &3.47e-09(6.04) &1.70e-09(6.04)\\
  \hline
  \multirow{2}{*}{6} &{5} &\multirow{2}{*}{$||u-u^h||_{L^2}$}  &1.77e-08(--) &3.75e-09(6.95) &1.01e-09(7.20) &3.35e-10(7.15) &1.29e-10(7.12) &5.61e-11(7.09)\\
    \cline{2-2}
    ~ &{6} &~ &1.22e-08(--) &2.63e-09(6.86) &7.29e-10(7.04) &2.47e-10(7.03) &9.67e-11(7.02) &4.23e-11(7.02)\\
  \hline
\end{tabular}
}
\end{center}
\label{convergence_rate_1d_as}
\end{table}

\begin{table}
\footnotesize
\caption{\scriptsize{$L^2$ errors in $u$ for problem 2 (\ref{manufactured}) when the A.S.-flux is used. $q$ is the degree of $u^h$, $s$ is the degree of $v^h$ and $N$ is the number of the cells with uniform mesh size $h = 40/N$.}}
\begin{center}
\scalebox{0.92}{
  \begin{tabular}{|c|c|c| c c c c c c|}
  \hline
  $q$& $s$ &\diagbox{error}{$N$}&40&800&1200&1600&2000&2400\\
  \hline
    \multirow{2}{*}{1} &{0} &\multirow{2}{*}{$||u-u^h||_{L^2}$} &5.50e-01(--) &2.91e-01(0.92) &1.97e-01(0.96) &1.48e-01(0.99) &1.18e-01(1.02) &9.88e-02(1.00) \\
    \cline{2-2}
    ~ &{1} &~ &9.92e-01(--) &5.85e-01(0.76) &4.29e-01(0.76) &3.47e-01(0.74) &2.95e-01(0.73)& 2.59e-01(0.71)\\
  \hline
  \multirow{2}{*}{2} &{1} &\multirow{2}{*}{$||u-u^h||_{L^2}$} &6.88e-04(--) &8.64e-05(2.99) &2.56e-05(3.00) &1.08e-05(3.00) &5.55e-06(2.98) &3.21e-06(3.00)\\
    \cline{2-2}
    ~ &{2} &~ &7.07e-04(--) &8.95e-05(2.98) &2.67e-05(2.98) &1.13e-05(2.99) &5.82e-06(2.97) &3.38e-06(2.98)\\
  \hline
  $q$& $s$ &\diagbox{error}{$N$}&50&100&200&400&800&1600\\
  \hline
  \multirow{2}{*}{3} &{2} &\multirow{2}{*}{$||u-u^h||_{L^2}$} &1.37e-02(--) &7.26e-04(4.24) &3.58e-05(4.34) &1.96e-06(4.19) &1.18e-07(4.06) &7.31e-09(4.01)\\
    \cline{2-2}
    ~ &{3} &~ &1.37e-02(--) &7.12e-04(4.26) &3.03e-05(4.55) &1.40e-06(4.44) &7.54e-08(4.21) &4.47e-09(4.07)\\
  \hline
  $q$& $s$ &\diagbox{error}{$N$}&80&100&120&140&160&180\\
  \hline
    \multirow{2}{*}{4} &{3} &\multirow{2}{*}{$||u-u^h||_{L^2}$} &5.66e-05(--) &1.73e-05(5.32) &6.70e-06(5.19) &3.04e-06(5.14) &1.54e-06(5.11) &8.43e-07(5.09)\\
    \cline{2-2}
    ~ &{4} &~ &4.58e-05(--) &1.43e-05(5.23) &5.58e-06(5.15) &2.54e-06(5.11) &1.29e-06(5.09) &7.08e-07(5.08)\\
  \hline
  \multirow{2}{*}{5} &{4} &\multirow{2}{*}{$||u-u^h||_{L^2}$} &2.95e-06(--) &8.09e-07(5.80) &2.80e-07(5.83) &1.13e-07(5.86) &5.16e-08(5.89) &2.57e-08(5.91)\\
    \cline{2-2}
    ~ &{5} &~ &2.36e-06(--) &5.95e-07(6.06) &1.97e-07(6.05) &7.76e-08(6.05) &3.46e-08(6.04) &1.70e-08(6.04)\\
  \hline
   \multirow{2}{*}{6} &{5} &\multirow{2}{*}{$||u-u^h||_{L^2}$} &1.65e-07(--) &3.35e-08(7.13) &9.16e-09(7.11) &3.07e-09(7.09) &1.20e-09(7.07) &5.20e-10(7.06)\\
    \cline{2-2}
    ~ &{6} &~ &1.36e-07(--) &2.82e-08(7.06) &7.81e-09(7.05) &2.64e-09(7.04) &1.03e-09(7.04) &4.50e-10(7.03)\\
  \hline
\end{tabular}
}
\end{center}
\label{convergence_rate_1d_as_con}
\end{table}

%!TEX root = dg_nonlinear.tex
\begin{table}
\footnotesize
\caption{\scriptsize{$L^2$ errors in $u$ for problem 1 (\ref{breather}) when the A.-flux is used. $q$ is the degree of
 $u^h$, $s$ is the degree of  $v^h$, and $N$ is the number of the cells with uniform mesh size $h = 40/N$.}}
\begin{center}
\scalebox{0.92}{
  \begin{tabular}{|c|c|c| c c c c c c|}
  \hline
  q&s&\diagbox{error}{N}&400&800&1200&1600&2000&2400\\
  \hline
    \multirow{2}{*}{1}&{0}&\multirow{2}{*}{$||u-u^h||_{L^2}$}&1.78e-03(--)&4.87e-04(1.87)&1.98e-04(2.22)&1.12e-04(1.99)&7.12e-05(2.02)&4.92e-05(2.03)\\
    \cline{2-2}
    ~&{1}&~&6.74e-01(--)&6.73e-01(0.00)&7.03e-01(-0.11)&6.96e-01(0.04)&6.91e-01(0.03)&6.87e-01(0.03)\\
  \hline
  \multirow{2}{*}{2}&{1}&\multirow{2}{*}{$||u-u^h||_{L^2}$}&3.62e-04(--)&2.16e-05(4.06)&6.18e-06(3.09)&8.19e-06(-0.98) &1.10e-06(8.99)&6.80e-07(2.65)\\
    \cline{2-2}
    ~&{2} &~ &3.23e-02(--) &2.58e-03(3.64) &2.86e-04(5.43) &2.98e-03(-8.14) &5.69e-04(7.41) &3.13e-04(3.27)\\
  \hline
  q&s&\diagbox{error}{N}&50&100&200&400&800&1600\\
  \hline
  \multirow{2}{*}{3}&{2}&\multirow{2}{*}{$||u-u^h||_{L^2}$} &3.99e-03(--) &5.67e-05(6.14) &3.60e-06(3.98) &2.21e-07(4.03) &1.37e-08(4.00) &1.34e-09(3.36)\\
    \cline{2-2}
    ~&{3} &~ &1.95e-03(--) &9.72e-05(4.33) &9.94e-06(3.29) &2.02e-06(2.30) &4.31e-08(5.55) &3.18e-09(3.76)\\
  \hline
  q&s&\diagbox{error}{N}&80&100&120&140&160&180\\
  \hline
    \multirow{2}{*}{4}&{3} &\multirow{2}{*}{$||u-u^h||_{L^2}$}  &7.12e-06(--) &2.35e-06(4.96) &9.50e-07(4.98) &4.40e-07(4.99) &2.26e-07(4.99) &1.26e-07(4.99)\\
    \cline{2-2}
    ~&{4} &~&8.64e-06(--) &2.88e-06(4.93) &1.16e-06(4.97) &5.39e-07(4.98) &2.77e-07(4.98) &1.54e-07(4.99)\\
  \hline
  \multirow{2}{*}{5}&{4} &\multirow{2}{*}{$||u-u^h||_{L^2}$} &3.88e-07(--) &1.01e-07(6.02) &3.41e-08(5.97) &1.36e-08(5.98) &6.10e-09(5.99) &3.01e-09(5.99)\\
    \cline{2-2}
    ~&{5} &~ &4.41e-07(--) &1.15e-07(6.02) &3.88e-08(5.97) &1.54e-08(5.98) &6.94e-09(5.98)&3.43e-09(5.99)\\
  \hline
  \multirow{2}{*}{6}&{5} &\multirow{2}{*}{$||u-u^h||_{L^2}$} &2.06e-08(--) &4.57e-09(6.74) &1.28e-09(6.97) &4.37e-10(6.98) &1.72e-10(6.97)&7.63e-11(6.92)\\
    \cline{2-2}
    ~&{6} &~ &2.25e-08(--) &5.02e-09(6.73) &1.41e-09(6.97) &4.80e-10(6.97) &1.90e-10(6.96)&8.41e-11(6.90)\\
  \hline
\end{tabular}
}
\end{center}
\label{convergence_rate_1d_a}
\end{table}

\begin{table}
\footnotesize
\caption{\scriptsize{$L^2$ errors in $u$ for problem 2 (\ref{manufactured}) when the A.-flux is used. $q$ is the degree of
 $u^h$, $s$ is the degree of  $v^h$ and $N$ is the number of the cells with uniform mesh size $h = 40/N$.}}
\begin{center}
\scalebox{0.89}{
  \begin{tabular}{|c|c|c| c c c c c c|}
  \hline
  $q$& $s$ &\diagbox{error }{N}&40&800&1200&1600&2000&2400\\
  \hline
    \multirow{2}{*}{1}&{0} &\multirow{2}{*}{$||u-u^h||_{L^2}$} &5.84e-01(--) &4.25e-01(0.46) &3.50e-01(0.48) &3.05e-01(0.48) &2.73e-01(0.50) &2.50e-01(0.49) \\
    \cline{2-2}
    ~&{1} &~ &4.31e+00(--) &4.34e+00(-0.01) &4.35e+00(-0.01) &4.36e+00(-0.01) &4.36e+00(0.00)& 4.37e+00(-0.00)\\
  \hline
  \multirow{2}{*}{2}&{1} &\multirow{2}{*}{$||u-u^h||_{L^2}$} &7.73e-04(--) &9.79e-05(2.98) &2.92e-05(2.98) &1.24e-05(2.98) &6.36e-06(2.99)&3.69e-06(2.98)\\
    \cline{2-2}
    ~&{2} &~ &1.17e-02(--) &2.92e-03(2.00) &1.30e-03(2.00) &7.30e-04(2.01) &4.67e-04(2.00)&3.24e-04(2.00)\\
  \hline
  $q$& $s$ &\diagbox{error }{N}&50&100&200&400&800&1600\\
  \hline
  \multirow{2}{*}{3}&{2} &\multirow{2}{*}{$||u-u^h||_{L^2}$}&1.46e-02(--) &6.81e-04(4.42) &4.20e-05(4.02) &2.29e-06(4.20) &1.43e-07(4.00)&9.39e-09(3.93)\\
    \cline{2-2}
    ~ &{3} &~ &3.71e-02(--) &2.74e-03(3.76) &1.78e-04(3.94) &1.12e-05(3.99) &7.03e-07(4.00)& 4.39e-08(4.00)\\
  \hline
  $q$& $s$ &\diagbox{error }{N}&80&100&120&140&160&180\\
  \hline
    \multirow{2}{*}{4}&{3} &\multirow{2}{*}{$||u-u^h||_{L^2}$} &8.23e-05(--) &2.59e-05(5.18) &1.06e-05(4.92) &4.96e-06(4.90) &2.41e-06(5.41) &1.32e-06(5.08)\\
    \cline{2-2}
    ~&{4} &~ &9.51e-05(--) &3.05e-05(5.10) &1.21e-05(5.07) &5.55e-06(5.06) &2.83e-06 (5.04)&1.57e-06(5.03)\\
  \hline
  \multirow{2}{*}{5}&{4} &\multirow{2}{*}{$||u-u^h||_{L^2}$} &3.74e-06(--) &1.01e-06(5.88) &3.47e-07(5.84) &1.41e-07(5.86) &6.39e-08(5.91)&3.14e-08(6.03)\\
    \cline{2-2}
    ~ &{5} &~ &4.19e-06(--) &1.10e-06(5.98) &3.69e-07(6.00) &1.47e-07(5.99) &6.57e-08(6.01) &3.24e-08(5.99)\\
  \hline
   \multirow{2}{*}{6}&{5} &\multirow{2}{*}{$||u-u^h||_{L^2}$} &2.18e-07(--) &4.60e-08(6.98) &1.29e-08(6.99) &4.35e-09(7.03) &1.74e-09(6.88) &7.89e-10(6.70)\\
    \cline{2-2}
    ~&{6} &~ &2.31e-07(--) &4.91e-08(6.93) &1.38e-08(6.97) &4.71e-09(6.97) &1.86e-09(6.97)&8.12e-10(7.03)\\
  \hline
\end{tabular}
}
\end{center}
\label{convergence_rate_1d_a_con}
\end{table}

\begin{table}
\footnotesize
\caption{\scriptsize{$L^2$ errors in $u$ for problem 1 (\ref{breather}) when the C.-flux is used. $q$ is the degree of
 $u^h$, $s$ is the degree of  $v^h$ and $N$ is the number of the cells with uniform mesh size $h = 40/N$.}}
\begin{center}
  \scalebox{0.92}{
  \begin{tabular}{|c|c|c| c c c c c c|}
  \hline
  $q$& $s$ &\diagbox{error}{N}&40&800&1200&1600&2000&2400\\
  \hline
    \multirow{2}{*}{1}&{0} &\multirow{2}{*}{$||u-u^h||_{L^2}$}& 2.32e-03(--)&5.80e-04(2.00) &2.58e-04(2.00) &1.45e-04(2.00) &9.28e-05(2.00) &6.44e-05(2.00) \\
    \cline{2-2}
    ~&{1} &~ &3.29e-03(--) &1.17e-03(1.50) &2.98e-04(3.36) &7.82e-04(-3.35) &2.52e-04(5.08) &6.50e-04(-5.21)\\
  \hline
  \multirow{2}{*}{2}&{1} &\multirow{2}{*}{$||u-u^h||_{L^2}$} & 4.99e-03(--)&1.06e-03(2.23) &7.32e-05(6.60) &4.53e-05(1.67) &1.52e-05(4.89) &8.98e-06(2.89)\\
    \cline{2-2}
    ~&{2} &~ & 2.85e-01(--) &4.11e-04(9.44) &3.85e-04(0.16) &3.17e-03(-7.32) &1.73e-03(2.72) &2.74e-04(10.10)\\
  \hline
  $q$& $s$ &\diagbox{error}{N}&50&100&200&400&800&1600\\
  \hline
  \multirow{2}{*}{3}&{2} &\multirow{2}{*}{$||u-u^h||_{L^2}$} & 2.55e-03(--)&3.79e-05(6.07) &1.90e-06(4.32) &1.13e-07(4.07) &6.98e-09(4.02) &4.35e-10(4.00)\\
    \cline{2-2}
    ~&{3} &~ & 2.07e-03(--) &5.44e-05(5.25) &2.20e-06(4.63) &1.19e-07(4.21) &7.04e-09(4.08) &4.36e-10(4.01)\\
  \hline
  $q$& $s$ &\diagbox{error }{N}&80&100&120&140&160&180\\
  \hline
    \multirow{2}{*}{4}&{3} &\multirow{2}{*}{$||u-u^h||_{L^2}$} &9.54e-06(--) &3.95e-06(3.95) &1.91e-06(3.99) &1.03e-06(3.98) &6.06e-07(3.99)& 3.79e-07(3.99)\\
    \cline{2-2}
    ~&{4} &~ &9.93e-06(--) &3.58e-06(4.57) &1.51e-06(4.72) &7.23e-07(4.80) &3.79e-07(4.84)&2.13e-07(4.88)\\
  \hline
    \multirow{2}{*}{5}&{4} &\multirow{2}{*}{$||u-u^h||_{L^2}$} &5.82e-07(--) &9.49e-08(8.13) &2.53e-08(7.25) &9.13e-09(6.62) &3.88e-09(6.41)&1.84e-09(6.31)\\
    \cline{2-2}
    ~&{5} &~ &4.30e-07(--) &9.63e-08(6.70) &2.90e-08(6.58) &1.06e-08(6.54) &4.45e-09(6.49)& 2.09e-09(6.43)\\
  \hline
   \multirow{2}{*}{6}&{5} &\multirow{2}{*}{$||u-u^h||_{L^2}$}&2.13e-08(--) &5.72e-09(5.89) &1.92e-09(5.98) &7.65e-10(5.98) &3.44e-10(5.98)&1.70e-10(5.99)\\
    \cline{2-2}
    ~&{6} &~  &2.17e-08(--) &5.31e-09(6.30) &1.61e-09(6.55) &5.76e-10(6.66) &2.34e-10(6.74)&1.05e-10(6.80)\\
  \hline
\end{tabular}
}
\end{center}
\label{convergence_rate_1d_c}
\end{table}

\begin{table}
\footnotesize
\caption{\scriptsize{$L^2$ errors in $u$ for problem 2 (\ref{manufactured}) when the C.-flux is used. $q$ is the degree of
 $u^h$, $s$ is the degree of  $v^h$ and $N$ is the number of the cells with uniform mesh size $h = 40/N$.}}
\begin{center}
\scalebox{0.92}{
  \begin{tabular}{|c|c|c| c c c c c c|}
  \hline
  $q$& $s$ &\diagbox{error }{N}&400&800&1200&1600&2000&2400\\
  \hline
    \multirow{2}{*}{1}&{0} &\multirow{2}{*}{$||u-u^h||_{L^2}$} &1.01e-01(--) &4.67e-02(1.11) &3.07e-02(1.03) &2.29e-02(1.02) &1.83e-02(1.00)&1.52e-02(1.01) \\
    \cline{2-2}
    ~&{1} &~ &4.44e-01(--) &3.02e-01(0.56) &2.45e-01(0.52) &2.12e-01(0.50) &1.90e-01(0.49) &1.73e-01(0.50)\\
  \hline
  \multirow{2}{*}{2}&{1} &\multirow{2}{*}{$||u-u^h||_{L^2}$} &4.57e-03(--) &1.14e-03(2.00) &5.08e-04(1.99) &2.86e-04(2.00) &1.83e-04(2.00)&1.27e-04(2.00)\\
    \cline{2-2}
    ~&{2} &~ &1.85e-02(--) &4.66e-03(1.99) &2.07e-03(2.00) &1.17e-03(1.98) &7.46e-04(2.02)&5.18e-04(2.00)\\
  \hline
  $q$& $s$ &\diagbox{error }{N}&50&100&200&400&800&1600\\
  \hline
  \multirow{2}{*}{3}&{2} &\multirow{2}{*}{$||u-u^h||_{L^2}$}&4.08e-02(--) &5.12e-04(6.31) &2.38e-05(4.43) &1.42e-06(4.06) &8.84e-08(4.01)&5.51e-09(4.00)\\
    \cline{2-2}
    ~&{3} &~ &3.56e-02(--) &1.06e-03(5.06) &2.78e-05(5.26) &1.22e-06(4.52) &7.13e-08(4.09)& 4.39e-09(4.02)\\
  \hline
  $q$& $s$ &\diagbox{error }{N}&80&100&120&140&160&180\\
  \hline
    \multirow{2}{*}{4}&{3} &\multirow{2}{*}{$||u-u^h||_{L^2}$} &1.19e-04(--) &5.01e-05(3.87) &2.43e-05(3.96) &1.23e-05(4.44) &7.07e-06(4.12)& 4.50e-06(3.84)\\
    \cline{2-2}
    ~&{4} &~ &1.11e-04(--) &3.85e-05(4.75) &1.59e-05(4.84) &7.50e-06(4.89) &3.88e-06(4.93)&2.17e-06(4.95)\\
  \hline
 \multirow{2}{*}{5}&{4} &\multirow{2}{*}{$||u-u^h||_{L^2}$} &7.02e-06(--) &9.44e-07(8.99) &2.69e-07(6.88) &1.00e-07(6.41) &4.27e-08(6.39)&2.02e-08(6.35)\\
    \cline{2-2}
    ~&{5} &~ &4.14e-06(--) &9.47e-07(6.61) &2.84e-07(6.60) &1.04e-07(6.52) &4.39e-08(6.46)& 2.06e-08(6.41)\\
  \hline
   \multirow{2}{*}{6}&{5} &\multirow{2}{*}{$||u-u^h||_{L^2}$}&2.24e-07(--) &5.73e-08(6.11) &1.91e-08(6.02) &7.77e-09(5.85) &3.47e-09(6.04)&1.70e-09(6.02)\\
    \cline{2-2}
    ~&{6} &~ &2.09e-07(--) &5.08e-08(6.34) &1.54e-08(6.54) &5.53e-09(6.66) &2.25e-09(6.73)&1.01e-09(6.79)\\
  \hline
\end{tabular}
}
\end{center}
\label{convergence_rate_1d_c_con}
\end{table}

%!TEX root = dg_nonlinear.tex
\subsection{Soliton solutions of the sine-Gordon equation in $1$D}
In this section, we consider the sine-Gordon equation without the dissipating term, i.e., $\theta = 0$,
\begin{equation}\label{sine_gordon_1d}
\frac{\partial^2 u}{\partial t^2} = c^2\frac{\partial^2 u}{\partial x^2} - \sin(u),\ \ \ \ x\in(-20,20), \ \ \ \ t\geq0.
\end{equation}
This equation appears in a number of physical applications and is famous for its soliton and multi-soliton solutions. Here, we focus on investigating these soliton solutions: breather soliton, kink soliton, anti-kink soliton and multi-soliton solutions: kink-kink collision, kink-antikink collision. In the numerical simulations, the number of elements is chosen to be $N = 120$. We impose no-flux conditions at the computational domain boundaries,
\begin{equation*}
   \frac{\partial u}{\partial x}(-20,t) = \frac{\partial u}{\partial x}(20,t) = 0, \ \ \ \ t\geq0.
\end{equation*}

%!TEX root = dg_nonlinear.tex
\subsubsection{Standing breather soliton}
To numerically simulate the breather soliton solution of the sine-Gordon equation (\ref{sine_gordon_1d}), we consider the initial conditions,
\begin{equation*}
    u(x,0) = 4\arctan \frac{\sqrt{0.75}}{0.5\cosh(\sqrt{0.75}x)},\ \ \ \ \frac{\partial u}{\partial t}(x,0) = 0,\ \ \ \ x\in(-20,20).
\end{equation*}
These conditions correspond to an exact standing breather soliton solution
\begin{equation*}
    u(x,t) = 4\arctan\frac{\sqrt{0.75}\cos(0.5t)}{0.5\cosh(\sqrt{0.75}x)}.
\end{equation*}
\textbf{\emph{Time history of the numerical energy:}} we first study the numerical energy of the DG approximations to the standing breather solution. As above, we consider the four different fluxes: A.-flux, C.-flux, A.S.-flux and S.-flux; we also consider the cases
where both $u^h, v^h$ are in the same approximation space ($s = q$) and when the degree of the approximation space for $v^h$ is one less than $u^h$ ($s = q-1$). The degree of the approximation space for $u$ is fixed to be $q = 4$. We evolve the numerical solution until $T = 120$ with $h=1/3$ and use the $4$-stage Runge Kutta method with $\Delta t = 0.195h/(2\pi)$.

\begin{figure}[h!]
\begin{subfigure}{.5\textwidth}
  \centering
  \includegraphics[width=1.0\linewidth]{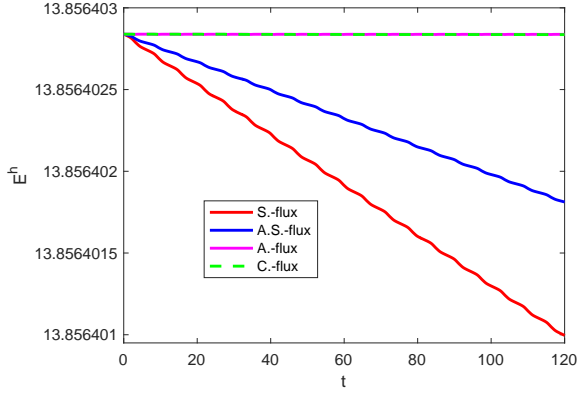}
  \caption{$s = q-1$}
  \label{fig:a1}
\end{subfigure}
\begin{subfigure}{.5\textwidth}
  \centering
  \includegraphics[width=1.0\linewidth]{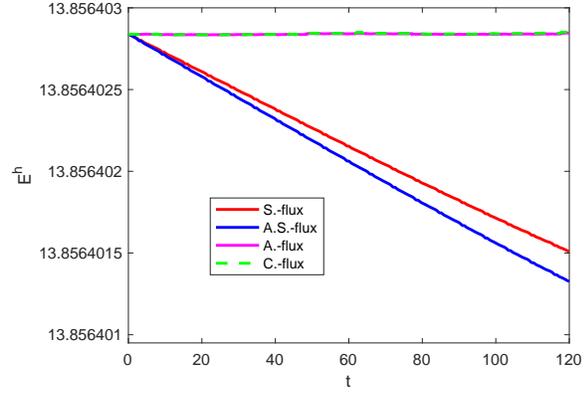}
  \caption{$s = q$}
  \label{fig:b1}
\end{subfigure}
\caption{\scriptsize{Plots of the history of the numerical energy for the standing breather solution.}}
\label{fig:energy_1d_breather}
\end{figure}

In Figure \ref{fig:energy_1d_breather}, we present the numerical energy for the schemes with S.-flux, A.S-flux, A.-flux and C.-flux.  From the left to the right are the cases where $u^h, v^h$ are in different approximation spaces, $s = q - 1$, and the same approximation space, $s = q$, respectively. Overall, we observe that the change of the numerical energy is not significant compared with the initial energy even for dissipating schemes. The A.S-flux and S.-flux produce energy dissipating schemes and they have somewhat different performance depending on $s$, but even then the energy is conserved
to around $7$ digits.

\textbf{\emph{The numerical standing breather soliton:}} the numerical standing breather solutions are shown in Figure \ref{fig:breathers} and Figure \ref{fig:breather}. In the simulation, $u^h, v^h$ are chosen to be in the same approximation space with $q = s =4$ and the S.-flux is used. Figure \ref{fig:breathers} shows both exact and numerical breather solutions at several times, $t = 0, 45, 90, 120$ respectively. Figure \ref{fig:breather} presents the space-time plot of the breather solution from $t = 0$ to $t = 120$. We find that the numerical results match well with the analytic solution.

\begin{figure}[h!]
\begin{subfigure}{.5\textwidth}
  \centering
  \includegraphics[width=1.0\linewidth]{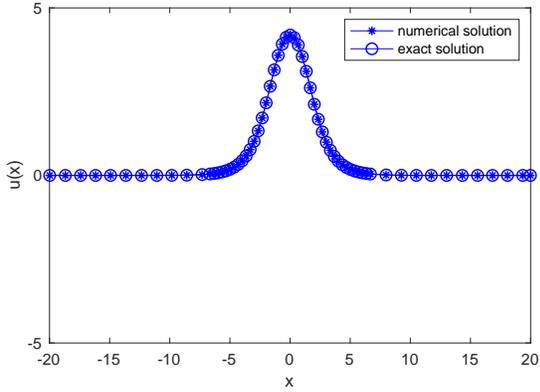}
  \caption{$t = 0$}
  \label{fig:ab1}
\end{subfigure}
\begin{subfigure}{.5\textwidth}
  \centering
  \includegraphics[width=1.0\linewidth]{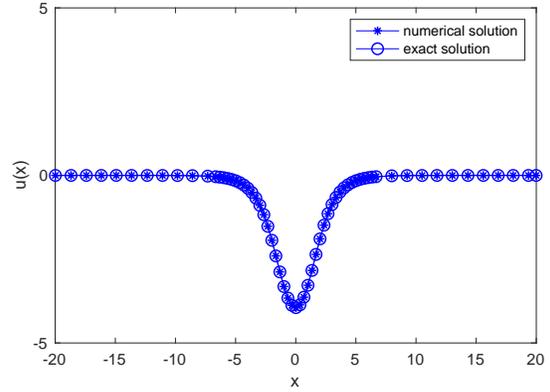}
  \caption{$t = 45$}
  \label{fig:ab2}
\end{subfigure}
\begin{subfigure}{.5\textwidth}
  \centering
  \includegraphics[width=1.0\linewidth]{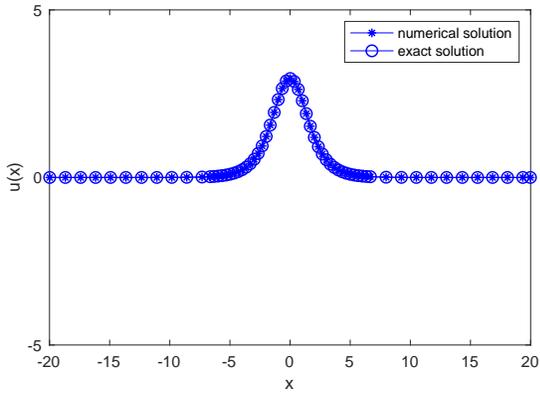}
  \caption{$t = 90$}
  \label{fig:ab3}
\end{subfigure}
\begin{subfigure}{.5\textwidth}
  \centering
  \includegraphics[width=1.0\linewidth]{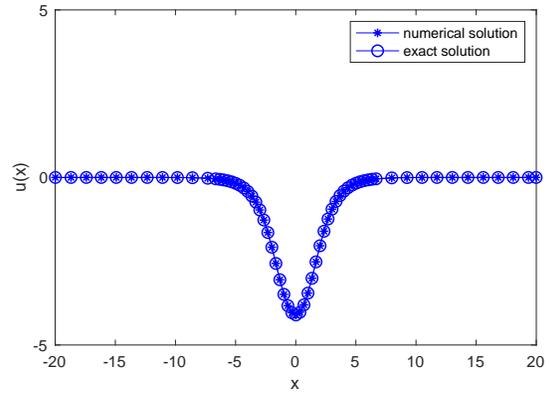}
  \caption{$t = 120$}
  \label{fig:ab4}
\end{subfigure}
\caption{\scriptsize{Plots of the standing breather with the degree of approximation space $q = s=4$. The S.-flux is used in the simulation. From the top to the bottom, the left to the right, the numerical and exact breather solutions at $t = 0,45,90,120$ are plotted.}}
\label{fig:breathers}
\end{figure}

\begin{figure}[h!]
\begin{center}
\includegraphics[width=0.5\textwidth]{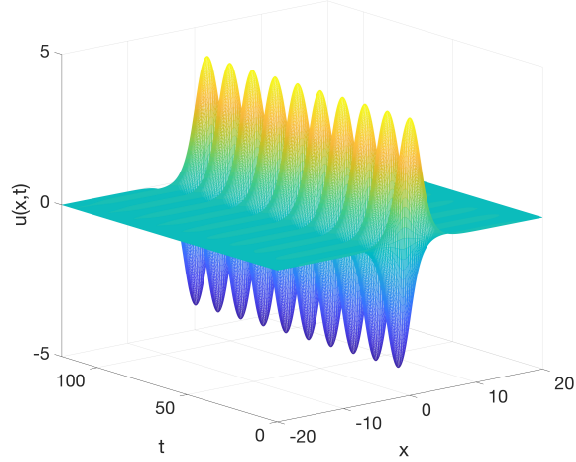}
\caption{\scriptsize{Space-time plots of the standing breather with the degree of approximation space $q = s=4$. The S.-flux is used in the simulation.}}
\label{fig:breather}
\end{center}
\end{figure}

\textbf{\emph{Time history of the $L^2$ error:}} the time history of the $L^2$ errors for the standing breather soliton solution with A.-flux, C.-flux, A.S.-flux and S.-flux are plotted in Figure \ref{fig:breather_errors} for both $s = q$ and $s = q-1$. Particularly, $q$ is set to be $4$ in the numerical simulation. The top panel is for energy-conserving schemes with the A.-flux and C.-flux from the left to right. The bottom panel is for energy-dissipating schemes with the A.S.-flux and S.-flux from the left to right. The error dynamics for all schemes except for the C.-flux are quite similar to each other
and for the two values of $s$ tested. For the C-flux., however, the errors display noticeably different patterns. Nonetheless,
the peak errors for all eight experiments are comparable.
Finally, considering that the standing breather solution is periodic in time, we note that the $L^2$ error grows linearly in time.

\begin{figure}[h!]
\begin{subfigure}{.5\textwidth}
  \centering
  \includegraphics[width=1.0\linewidth]{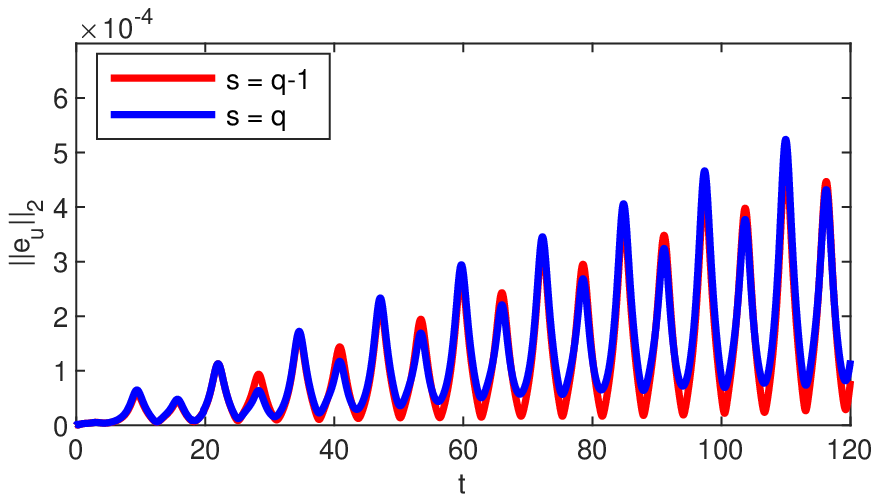}
  \caption{$A.$-flux}
  \label{fig:ae1}
\end{subfigure}
\begin{subfigure}{.5\textwidth}
  \centering
  \includegraphics[width=1.0\linewidth]{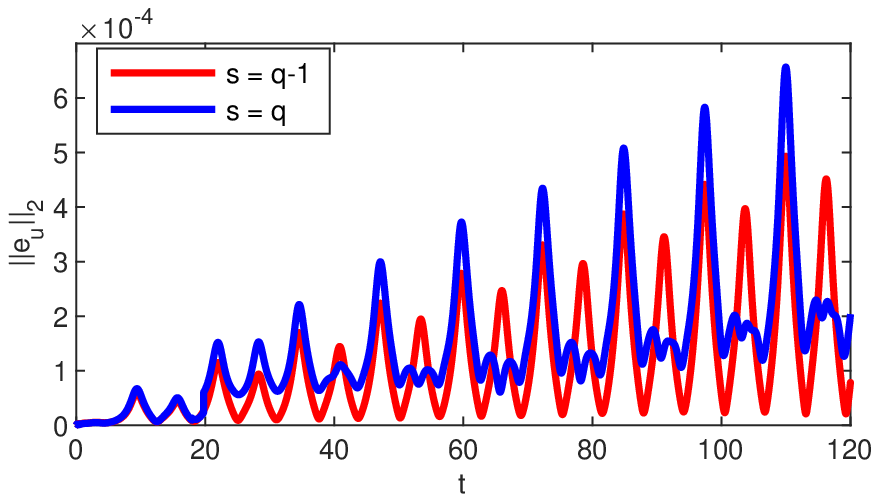}
  \caption{$C.$-flux}
  \label{fig:ae2}
\end{subfigure}
\begin{subfigure}{.5\textwidth}
  \centering
  \includegraphics[width=1.0\linewidth]{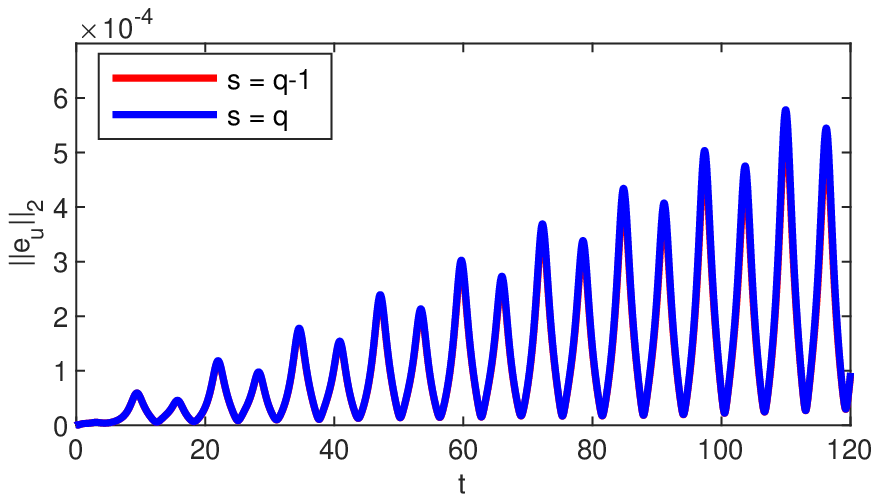}
  \caption{$A.S.$-flux}
  \label{fig:ae3}
\end{subfigure}
\begin{subfigure}{.5\textwidth}
  \centering
  \includegraphics[width=1.0\linewidth]{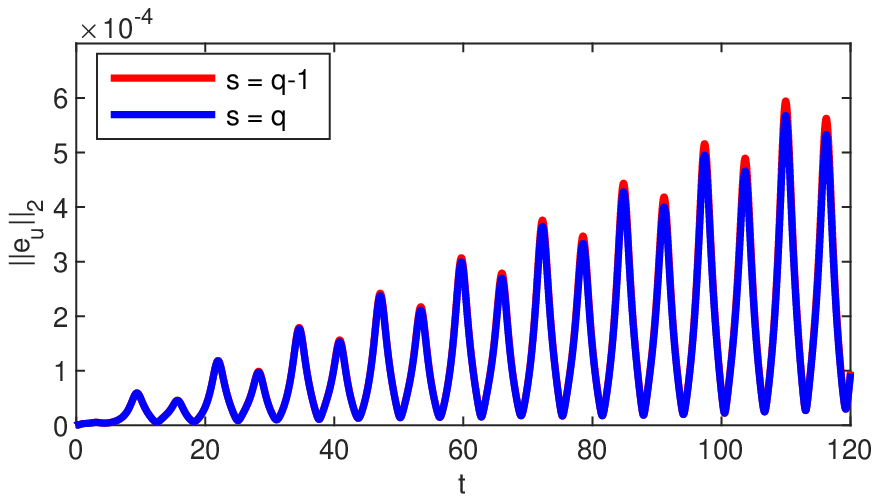}
  \caption{$S.$-flux}
  \label{fig:ae4}
\end{subfigure}
\caption{\scriptsize{Plots of the history of the $L^2$ errors for $u$, standing breather. The first row is for energy-conserving schemes, from the left to right the A.-flux and the C.-flux respectively. The second row is for energy-dissipating schemes, from the left to right the A.S.-flux and the S.-flux respectively. The degree of the approximation space for $u$ is $q = 4$ and for $v$ is $s$.}}
\label{fig:breather_errors}
\end{figure}

%!TEX root = dg_nonlinear.tex
\subsubsection{Kink soliton and antikink soliton}\label{sec_kink}
For the kink soliton solution, the sine-Gordon equation (\ref{sine_gordon_1d}) is solved with the initial condition,
\begin{equation*}
 u(x,0) = 4\arctan\left(\mbox{exp}\left(\frac{x}{\sqrt{1-\mu^2}}\right)\right),  \ \ \ \ \frac{\partial u}{\partial t}(x,0) =-\frac{2\mu}{\sqrt{1-\mu^2}}\mbox{sech}\left(\frac{x}{\sqrt{1-\mu^2}}\right), \ \ \ \ x\in(-20,20).
\end{equation*}
The analytic kink solution
\begin{equation}\label{kink}
 u(x,t) = 4\arctan\left(\mbox{exp}\left(\frac{x-\mu t}{\sqrt{1-\mu^2}}\right)\right)
\end{equation}
is a traveling wave increasing monotonically from $0$ to $2\pi$ as $x$ varies from $-\infty$ to $\infty$. In contrast with the kink soliton (\ref{kink}), for the antikink soliton solution we solve the sine-Gordon equation (\ref{sine_gordon_1d}) with the initial conditions,
\begin{equation*}
 u(x,0) = 4\arctan\left(\mbox{exp}\left(-\frac{x}{\sqrt{1-\mu^2}}\right)\right),  \ \ \ \ \frac{\partial u}{\partial t}(x,0) =\frac{2\mu}{\sqrt{1-\mu^2}}\mbox{sech}\left(\frac{x}{\sqrt{1-\mu^2}}\right), \ \ \ \ x\in(-20,20)
\end{equation*}
which leads to an analytic antikink solution
\begin{equation}\label{antikink}
 u(x,t) = 4\arctan\left(\mbox{exp}\left(-\frac{x-\mu t}{\sqrt{1-\mu^2}}\right)\right).
\end{equation}
Compared with the kink solution (\ref{kink}), the antikink soliton (\ref{antikink}) is also a traveling wave solution, but the solution varies monotonically from $2\pi$ to $0$ as $x$ varies from $-\infty$ to $\infty$.

In the numerical simulation, the velocity for both the kink soliton and the antikink soliton is chosen to be $\mu = 0.2$. For the kink soliton, an energy-conserving scheme with the A.-flux is used, while the C.-flux is used in the simulation of
the antikink soliton. We take $u^h$ and $v^h$ to be in different approximation spaces, i.e., $s = q-1$, with $q = 4$. Finally, the problem is evolved with a $4$-stage Runge Kutta time integrator until $T = 80$ with time step size $\Delta t = 0.01$.

The space-time plots of the kink and antikink solitons are shown in Figure \ref{fig:kink_antikink}. From the left to the right are the kink soliton and the antikink soliton respectively. From the left graph, we see that the kink soliton increases monotonically from $0$ to $2\pi$ and the antikink soliton decreases from $2\pi$ to $0$ monotonically in the right graph. Both kink and antikink solitons move from the left to the right and keep their original shape.

\begin{figure}[tbhp]
\begin{center}
\begin{subfigure}{.49\textwidth}
  \centering
  \includegraphics[width=1.0\linewidth]{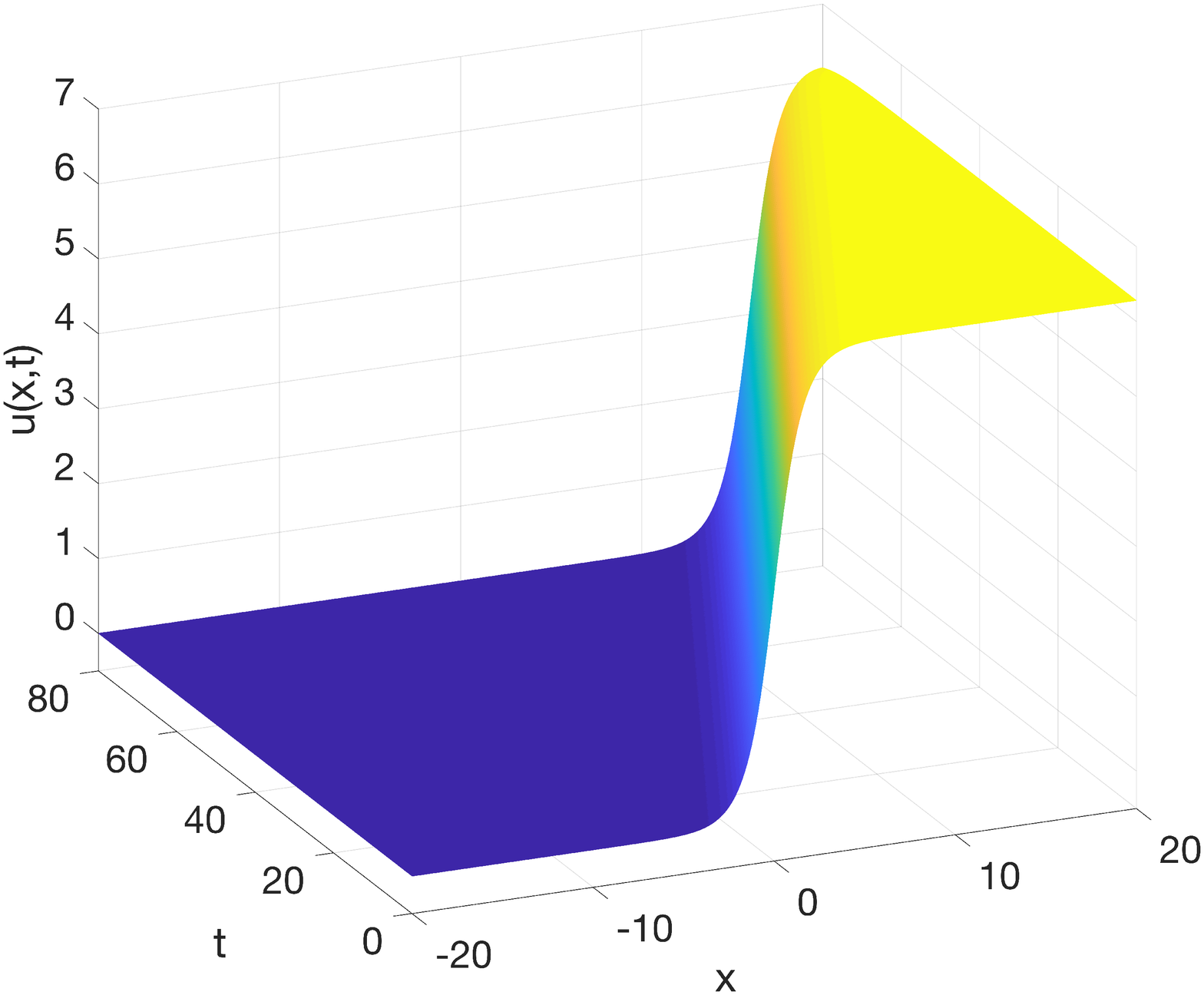}
  \caption{Kink soliton}
  \label{fig:a3}
\end{subfigure}
\begin{subfigure}{.49\textwidth}
  \centering
  \includegraphics[width=1.0\linewidth]{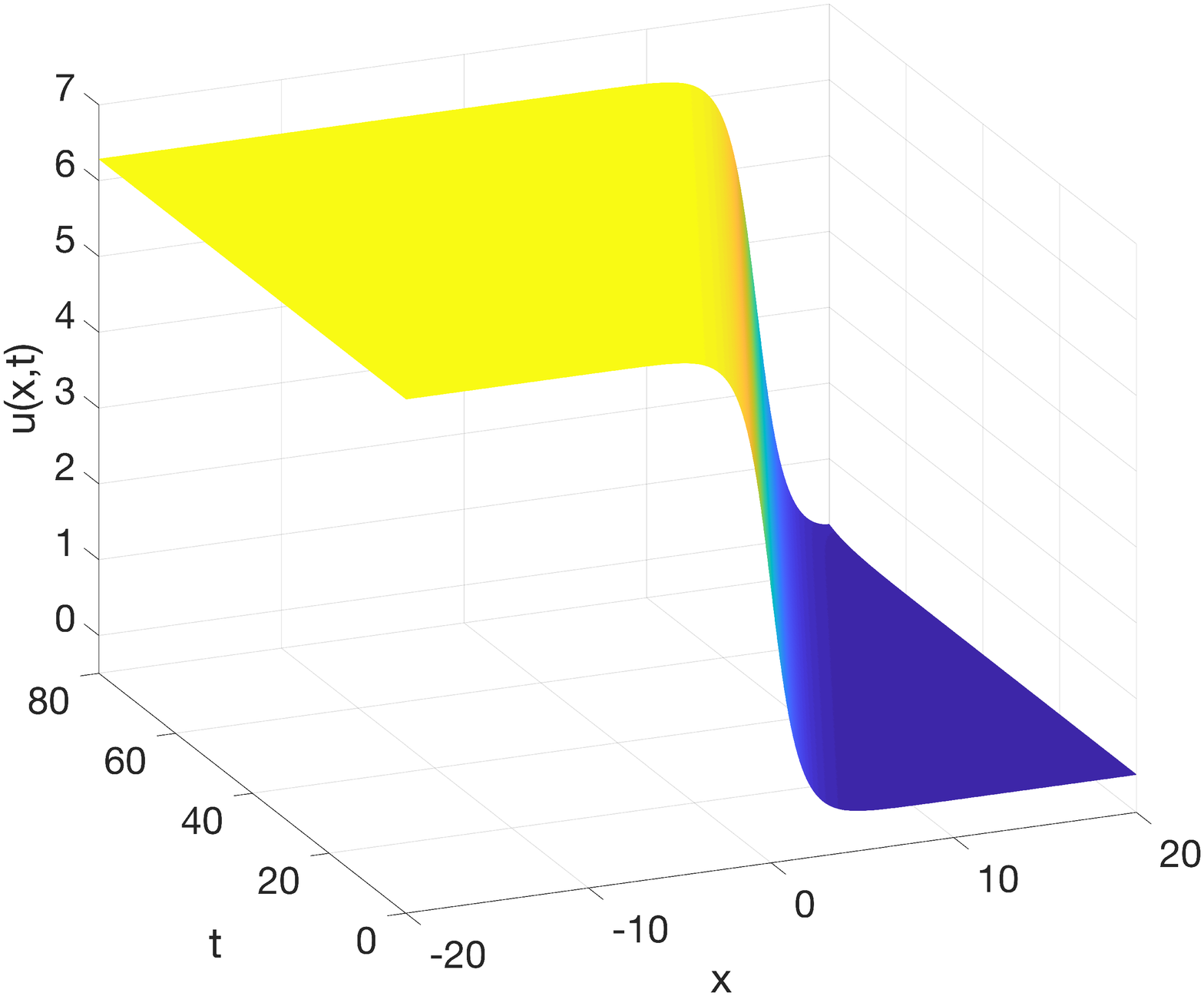}
  \caption{Antikink soliton}
  \label{fig:b3}
\end{subfigure}
\caption{\scriptsize{From the left to the right, plots for the kink and antikink solitons respectively. The degree of approximation space for $u$ is $q = 4$ and for $v$ is $s = 3$. An energy-based DG scheme with the A.-flux is used in the simulation of kink soliton and the C.-flux is used for the simulation of antikink soliton.}}
\label{fig:kink_antikink}
\end{center}
\end{figure}

%!TEX root = dg_nonlinear.tex
\subsubsection{Kink-kink collision and kink-antikink collision}
To numerically simulate the kink-kink collision,
we use the superposition of two kink solitons as the initial condition for (\ref{sine_gordon_1d}), one moves from the left to the right and the other moves from the right to the left as follows,
\begin{equation*}
    u(x,0) = 4\arctan\left(\mbox{exp}\left(\frac{x+10}{\sqrt{1-\mu^2}}\right)\right) + 4\arctan\left(\mbox{exp}\left(\frac{x-10}{\sqrt{1-\mu^2}}\right)\right),\ \ \ \ x\in(-20,20),
\end{equation*}
\begin{equation*}
    \frac{\partial u}{\partial t}(x,0) = -\frac{2\mu}{\sqrt{1-\mu^2}}\mbox{sech}\left(\frac{x+10}{\sqrt{1-\mu^2}}\right) + \frac{2\mu}{\sqrt{1-\mu^2}}\mbox{sech}\left(\frac{x-10}{\sqrt{1-\mu^2}}\right),\ \ \ \ x\in(-20,20).
\end{equation*}
Similarly, for the kink-antikink soliton collision we choose the superposition of a kink soliton and an antikink soliton as the initial conditions; the kink soliton moves from the left to the right and the antikink soliton moves from the right to the left as follows,
\begin{equation*}
    u(x,0) = 4\arctan\left(\mbox{exp}\left(\frac{x+10}{\sqrt{1-\mu^2}}\right)\right) + 4\arctan\left(\mbox{exp}\left(-\frac{x-10}{\sqrt{1-\mu^2}}\right)\right),\ \ \ \ x\in(-20,20),
\end{equation*}
\begin{equation*}
    \frac{\partial u}{\partial t}(x,0) = -\frac{2\mu}{\sqrt{1-\mu^2}}\mbox{sech}\left(\frac{x+10}{\sqrt{1-\mu^2}}\right) - \frac{2\mu}{\sqrt{1-\mu^2}}\mbox{sech}\left(-\frac{x-10}{\sqrt{1-\mu^2}}\right),\ \ \ \ x\in(-20,20).
\end{equation*}
Note that we simply use the superposition of two kink solitons (kink and antikink solitons) to be the initial conditions rather than the analytic solution of the corresponding collisions.

The parameter $\mu$ is chosen to be $0.2$ in the numerical simulation. For the kink-kink collision soliton, an energy-dissipating scheme with the A.S.-flux is used, and the S.-flux is used in the simulation of kink-antikink collision soliton. Besides, $u^h, v^h$ are assumed to be in different approximation spaces, i.e., $s = q-1$, with $q = 4$. Finally, the problem is evolved with a $4$-stage Runge-Kutta time integrator until $T = 80$ with time step size $\Delta t = 0.01$.

The plots of the kink-kink and the kink-antikink soliton collisions are shown in Figure \ref{fig:kinkkink_kinkantikink}. In the left graph we observe that initially the two kinks move towards each other at the same speed. The kink with the profile from $0$ to $2\pi$ moves from left to
right and the kink with profile from $2\pi$ to $4\pi$ moves from right to left. After a certain time, they collide with each other and are immediately reflected, keeping their original shape while moving in the opposite direction. The space-time plot of the kink-antikink collision is shown in the right graph. We see that the kink and antikink solitons move towards each other at the same speed. Here the kink with profile from $2\pi$ to $4\pi$ moves left to right and the antikink with profile from $4\pi$ to $2\pi$ moves right to left. After the collision, they move away from each other with their original velocity and direction but changed profiles.

\begin{figure}[h!]
\begin{center}
\begin{subfigure}{.49\textwidth}
  \centering
  \includegraphics[width=1.0\linewidth]{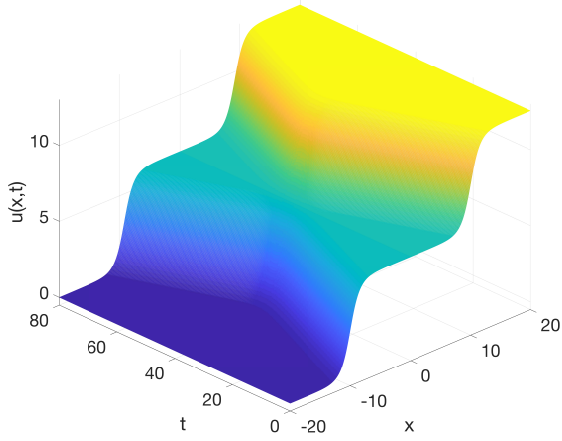}
  \caption{Kink-kink collision}
  \label{fig:a5}
\end{subfigure}
\begin{subfigure}{.49\textwidth}
  \centering
  \includegraphics[width=1.0\linewidth]{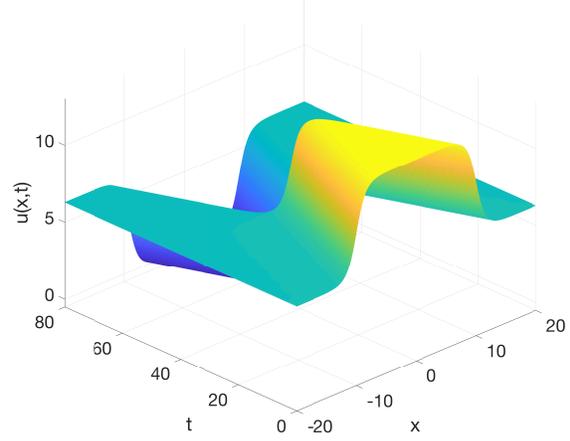}
  \caption{Kink-antikink collision}
  \label{fig:b5}
\end{subfigure}
\caption{\scriptsize{From the left to the right plots of the kink-kink and kink-antikink collisions respectively: the degree of approximation space for $u$ is $q = 4$ and for $v$ is $s = 3$. An energy-based DG scheme with A.S.-flux used in the simulation of kink-kink collision and the S.-flux used for the simulation of the kink-antikink collision.}}
\label{fig:kinkkink_kinkantikink}
\end{center}
\end{figure}

%!TEX root = dg_nonlinear.tex
\subsection{Convergence in $2$D}\label{2d_sec_conv}
In this section we investigate the convergence rate of the proposed energy-based DG scheme in $2$D. Specifically, we set $\theta = 0$ and $f(u) = -4u^3$, i.e.,
\begin{equation}\label{2d_eq_conv}
    \frac{\partial^2 u}{\partial t^2} = c^2\left(\frac{\partial^2 u}{\partial x^2}+\frac{\partial^2 u}{\partial y^2}\right) - 4u^3 + f_1(x,y,t), \ \ \ \ (x,y)\in(0,1)\times(0,1),\ \ \ \ t\geq0.
\end{equation}
We construct a manufactured solution
\begin{equation}\label{2d_sol_conv}
    u(x,y,t) = \cos(2\pi x)\cos(2\pi y)\sin(2\pi t),\ \ \ \ (x,y)\in(0,1)\times(0,1),\ \ \ \ t\geq0 ,
\end{equation}
to solve (\ref{2d_eq_conv}). The initial conditions, boundary conditions and external forcing $f_1(x,y,t)$ are determined by $u$ in (\ref{2d_sol_conv}).

The discretization is performed with elements whose vertices are on the Cartesian grids defined by $x_i = ih, y_j = jh, i,j = 0,1,\cdots,n$ with $h = 1/n$. We evolve the solution with the RK4 time integrator until the final time $T = 0.2$ with a time step size of $\Delta t = 0.075h/(2\pi)$. As in the $1$D test, we use four different fluxes: C.-flux, A.-flux, A.S.-flux and S.-flux, but only consider the case where $u^h$ and $v^h$ are in the same approximation space, i.e., $q_x = s_x = q$ and $q_y = s_y = q$.

\begin{figure}[h!]
\begin{subfigure}{.5\textwidth}
  \centering
  \includegraphics[width=1.0\linewidth]{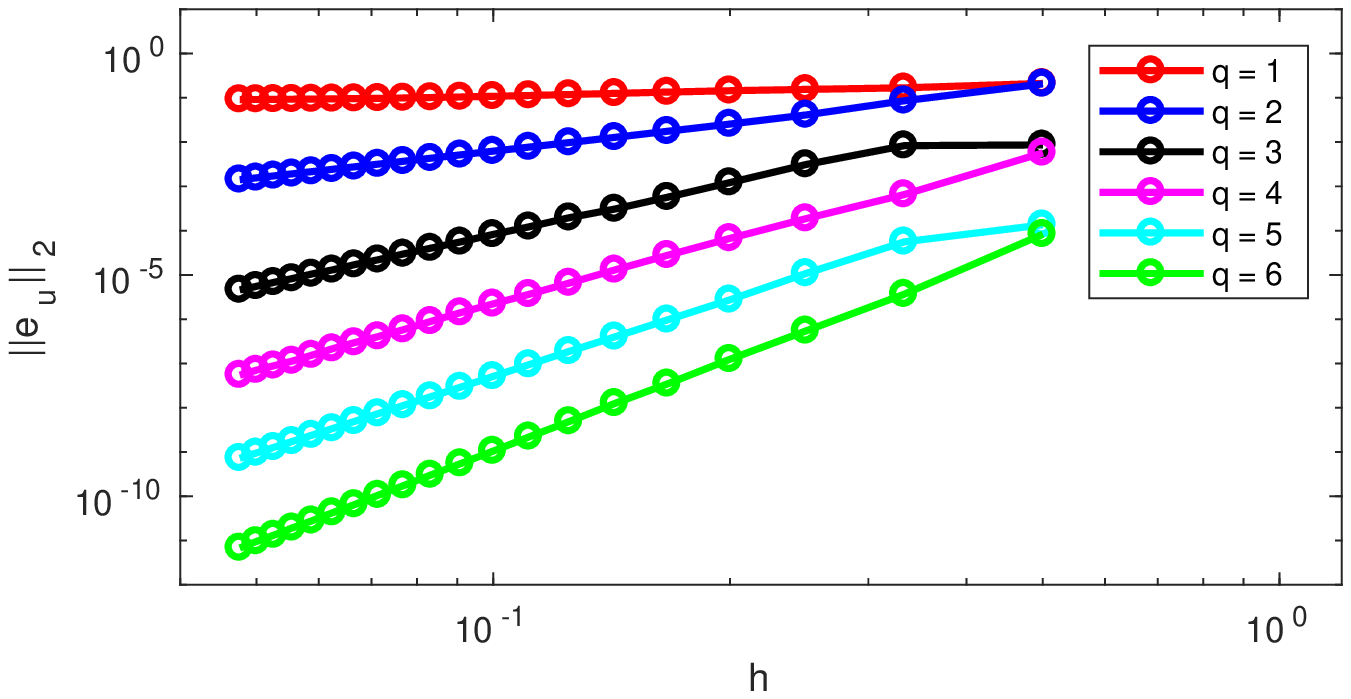}
  \caption{A.-flux}
  \label{fig:a7}
\end{subfigure}
\begin{subfigure}{.5\textwidth}
  \centering
  \includegraphics[width=1.0\linewidth]{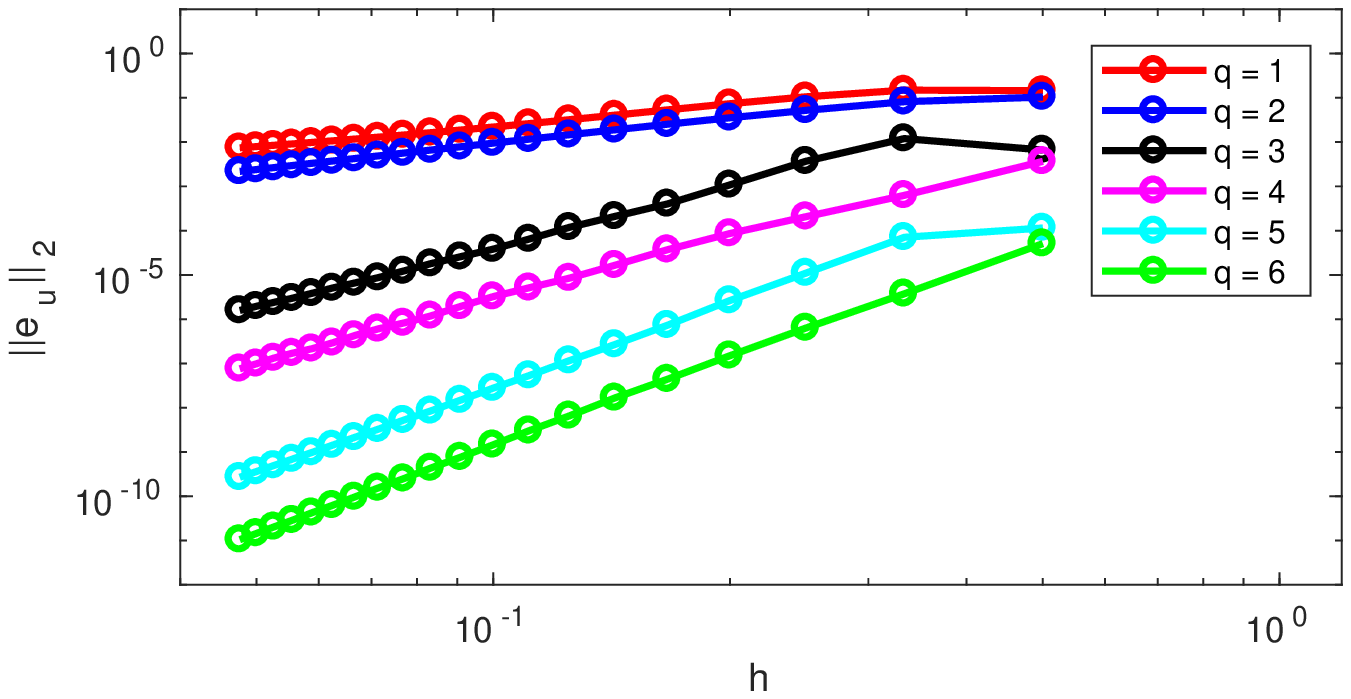}
  \caption{C.-flux}
  \label{fig:b7}
\end{subfigure}
\begin{subfigure}{.5\textwidth}
  \centering
  \includegraphics[width=1.0\linewidth]{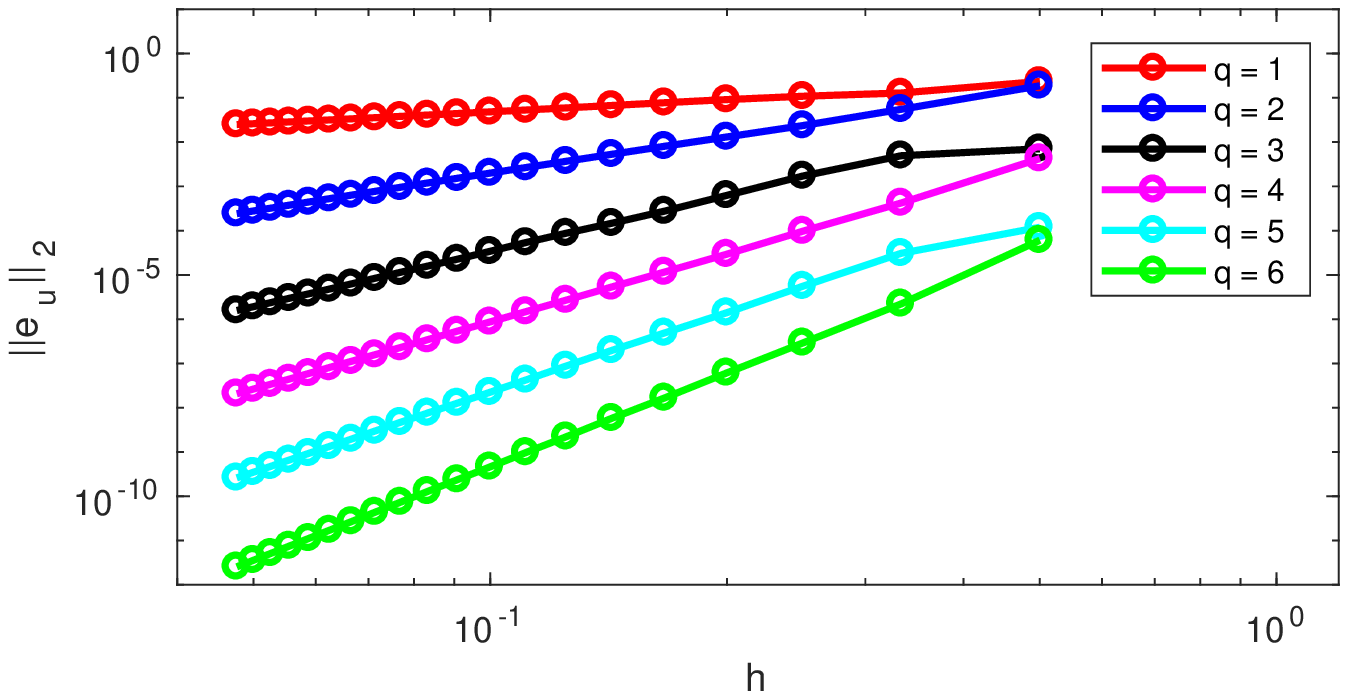}
  \caption{A.S.-flux}
  \label{fig:c7}
\end{subfigure}
\begin{subfigure}{.5\textwidth}
  \centering
  \includegraphics[width=1.0\linewidth]{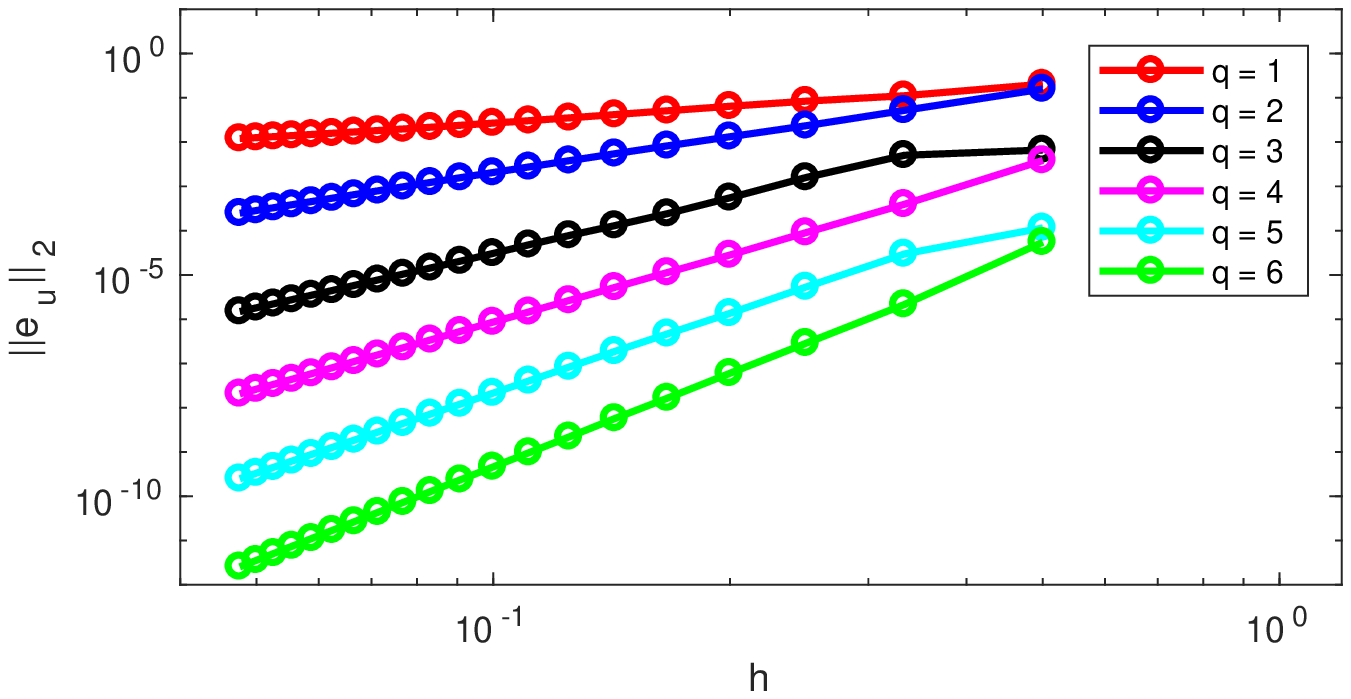}
  \caption{S.-flux}
  \label{fig:d7}
\end{subfigure}
\caption{\scriptsize{The plot of $L^2$ errors for $u$ for the $2$D convergence test: from left to right, top to  bottom A.-flux, C.-flux, A.S.-flux and S.-flux respectively. The approximation degrees for $u^h, v^h$ are $q_x = q_y = s_x = s_y = q$.}}
\label{fig:u_error_2d}
\end{figure}

\begin{table}
\footnotesize
\caption{\scriptsize{Linear regression estimates of the convergence rate for $u$ with C.-flux, A.-flux, S.-flux and A.S.-flux for the $2$D test problem. The approximation degrees for $u^h, v^h$ are $q_x = q_y = s_x = s_y = q$.}}
\begin{center}
  \begin{tabular}{|c|c c c c c c |}
  \hline
{Degree ($q$) of approx. of $u$ }  & 1 & 2 & 3 & 4 & 5 & 6  \\
\hline
{Rate fit with A.-flux ~~~~}&0.19& 2.00 & 3.89 & 4.98 & 5.73 & 6.77 \\
\cline{1-7}
  {Rate fit with C.-flux ~~~~}&1.40 & 1.99 & 4.31 & 4.93 & 6.18 & 6.64 \\
  \cline{1-7}
  {Rate fit with A.S.-flux }&0.89 & 2.86 & 4.14 & 5.03 & 6.03 & 7.02 \\
  \cline{1-7}
  {Rate fit with S.-flux ~~~~}&1.05 & 2.87 & 4.07 & 5.02 & 6.02 & 7.01 \\
  \cline{1-7}
\end{tabular}
\end{center}
\label{convergence_rate_u_2d}
\end{table}

In Figure \ref{fig:u_error_2d}, the $L^2$ errors for $u$ are plotted against the mesh size $h_x = h_y = h$. Table \ref{convergence_rate_u_2d} presents the linear regression estimates of the
convergence rate for $u$ based on the data in Figure \ref{fig:u_error_2d}. Note that we only use the ten finest grids to compute the convergence rate here. From Table \ref{convergence_rate_u_2d}, we observe the optimal convergence rate of $q+1$ for the A.S.-flux and the S.-flux when $q\geq2$ and an order reduction by $1$ compared with the optimal convergence rate for $q = 1$. For the A.-flux and C.-flux, we observe optimal convergence for $q\geq3$, and an order reduction by $1$ for $q = 2$. When $q = 1$, the A.-flux has an order reduction by $2$ compared with the optimal rate and for the C.-flux an order reduction by $\frac{1}{2}$ compared with optimal. These observations are consistent with the results in 1D.

%!TEX root = dg_nonlinear.tex
\subsection{Time history of the numerical energy in $2$D}\label{2d_energy}
We compare the numerical energy for both cases with $\theta = 0$ and $\theta = 1$ in this section. Precisely, we consider,
\begin{equation}\label{2d_eq_energy}
   \frac{\partial^2 u}{\partial t^2} + \theta \frac{\partial u}{\partial t} = c^2\left(\frac{\partial^2 u}{\partial x^2}+\frac{\partial^2 u}{\partial y^2}\right) - 4u^3 , \ \ \ \ (x,y)\in(0,1)\times(0,1),\ \ \ \ t>0,
\end{equation}
with initial conditions
\begin{equation*}
    u(x,y,0) = -\cos(2\pi x)\cos(2\pi y), \ \ \ \ \ \frac{\partial u}{\partial t}(x,y,0) = \cos(2\pi x)\cos(2\pi y),\ \ \ \ (x,y)\in(0,1)\times(0,1),
\end{equation*}
and flux free physical boundary conditions,
\begin{equation*}
    \frac{\partial u}{\partial x}(0,y,t) = \frac{\partial u}{\partial x}(1,y,t) = 0, \ \ \ \ y\in(0,1), \ \ \ \ t>0; \ \ \ \  \frac{\partial u}{\partial y}(x,0,t) = \frac{\partial u}{\partial y}(x,1,t) = 0, \ \ \ \ x\in(0,1), \ \ \ \ t>0 .
\end{equation*}
 The space discretization is same as in Section \ref{2d_sec_conv} with $n = 5$. The degree of the approximation space is set to be $q_x = q_y = s_x = s_y = 4$. Finally, the problems are evolved with the RK$4$ time integrator until the final time $T = 10$ with time step size chosen to be $\Delta t = 0.075h/(2\pi)$.

\begin{figure}[h!]
\begin{subfigure}{.5\textwidth}
  \centering
  \includegraphics[width=1.0\linewidth]{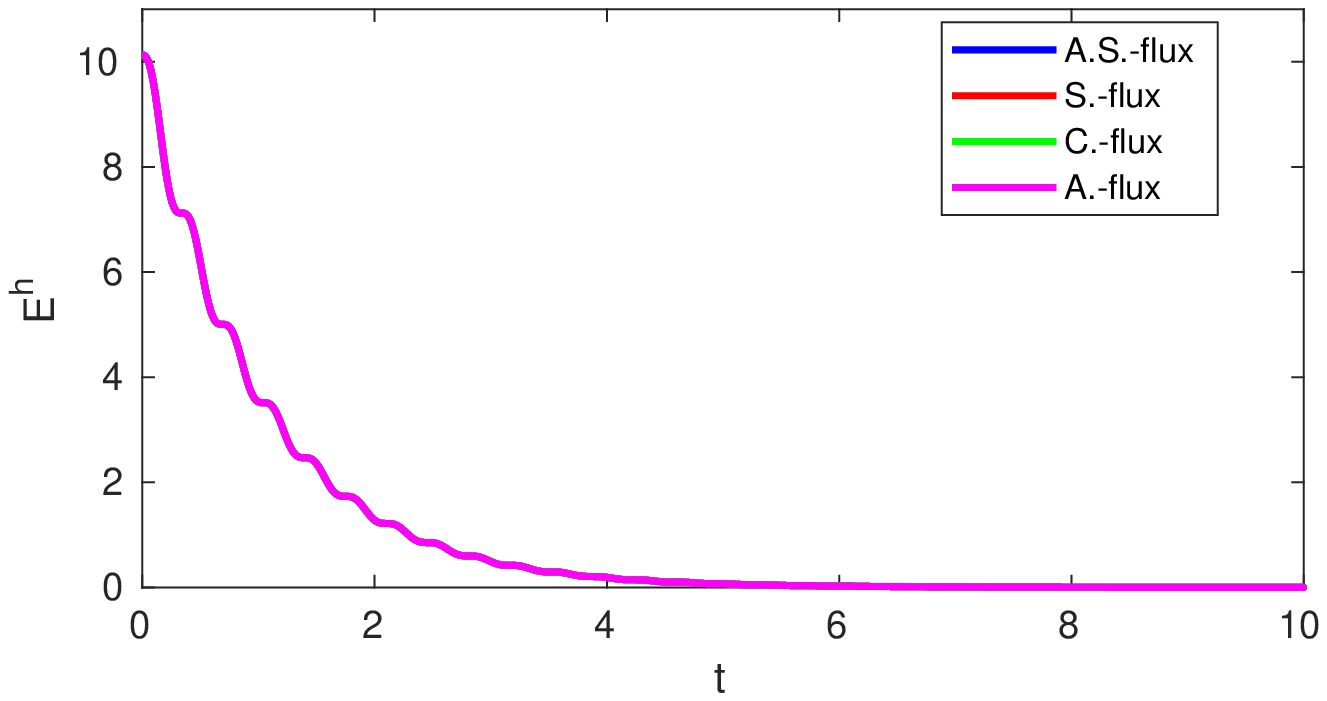}
  \caption{$\theta = 1$}
  \label{fig:a10}
\end{subfigure}
\begin{subfigure}{.5\textwidth}
  \centering
  \includegraphics[width=1.0\linewidth]{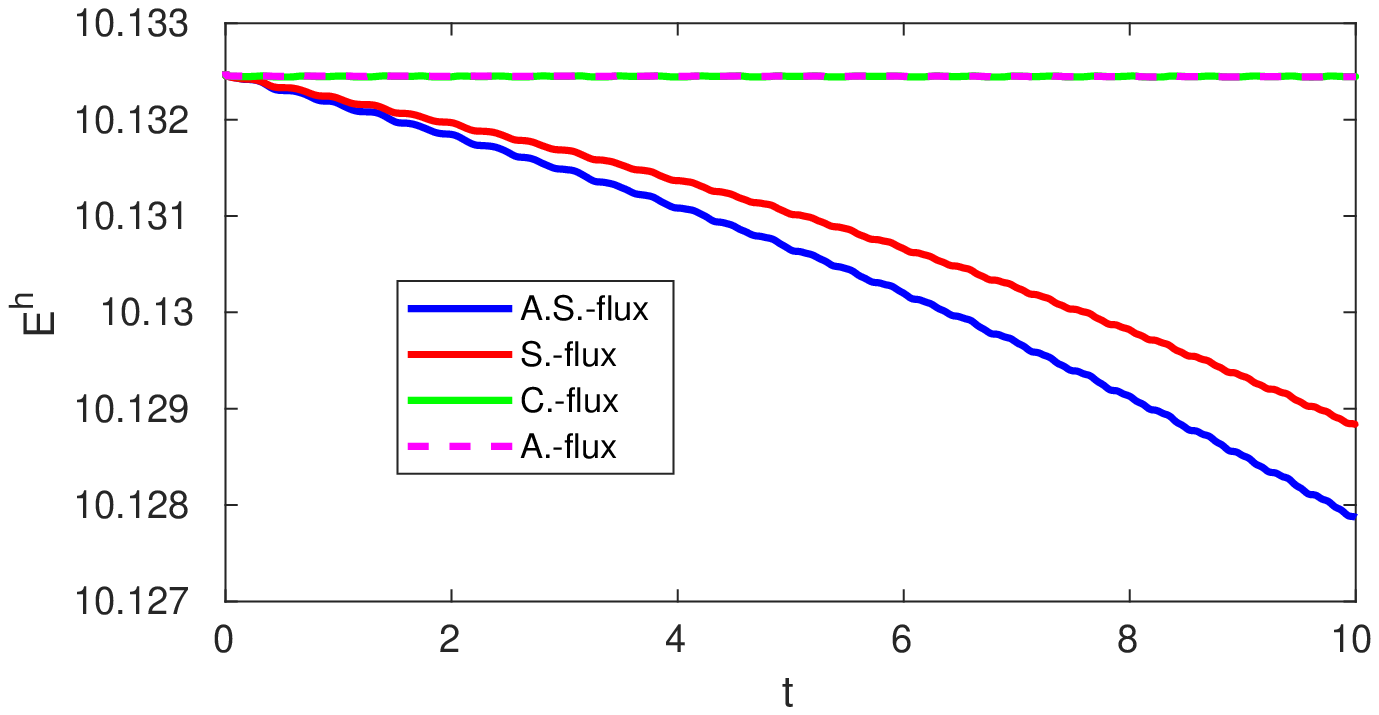}
  \caption{$\theta = 0$}
  \label{fig:b10}
\end{subfigure}
\caption{\scriptsize{The plots of energy for DG solutions of (\ref{2d_eq_energy}) with four different fluxes. For the left graph, the dissipating term is considered, i.e., $\theta = 1$; while the right graph does not contain the dissipating term, i.e., $\theta = 0$.}}
\label{fig:energy_2d}
\end{figure}

In Figure \ref{fig:energy_2d}, the left graph shows the numerical energy evolution with four different fluxes for the problem with dissipating term, $\theta = 1$; the right graph presents the numerical energy evolution with four different fluxes for the problem without dissipating term, $\theta = 0$. We observe that for the case without dissipating term both the A.-flux and C.-flux conserve the numerical energy; both S.-flux and A.S.-flux are energy dissipating but the total dissipation is small. For the case with dissipating term, $\theta = 1$, the numerical energy dissipates for all fluxes, the numerical energy evolution for schemes with A.S.-flux, S.-flux, C.-flux and A.-flux are on top of each other and the numerical energy dissipates very fast.

%!TEX root = dg_nonlinear.tex
\subsection{Focusing equation}
Finally, we consider a focusing problem whose energy is indefinite. Specifically, we test the problem
\begin{equation}\label{last_eq}
    \frac{\partial^2 u}{\partial t^2}+\theta\frac{\partial u}{\partial t} = c^2\left(\frac{\partial^2 u}{\partial x^2}+\frac{\partial^2 u}{\partial y^2}\right) + 4u^3, \ \ \ \ (x,y)\in(0,1)\times(0,1),\ \ \ \ t>0,
\end{equation}
for both $\theta = 0$ and $\theta = 1$. We use the same initial data as in Section \ref{2d_energy}
\begin{equation*}
    u(x,y,0) = -\cos(2\pi x)\cos(2\pi y),\ \ \ \ \frac{\partial u}{\partial t}(x,y,0) = \cos(2\pi x)\cos(2\pi y),\ \ \ \ (x,y)\in(0,1)\times(0,1),
\end{equation*}
and periodic boundary conditions are imposed in both $x$ and $y$ directions with $u(0,y,t) = u(1,y,t)$, $y\in(0,1)$ and $u(x,0,t) = u(x,1,t)$, $x\in(0,1)$.

The space discretization is the same with the one in Section \ref{2d_sec_conv} and we set $n = 5$. The degree of the approximation space is set to be $q_x = q_y = s_x = s_y = 4$. Finally, the problems are evolved with the RK$4$ time integrator and the S.-flux with time step size $\Delta t = 0.075h/(2\pi)$.

\begin{figure}[h!]
\begin{subfigure}{.5\textwidth}
  \centering
  \includegraphics[width=1.0\linewidth]{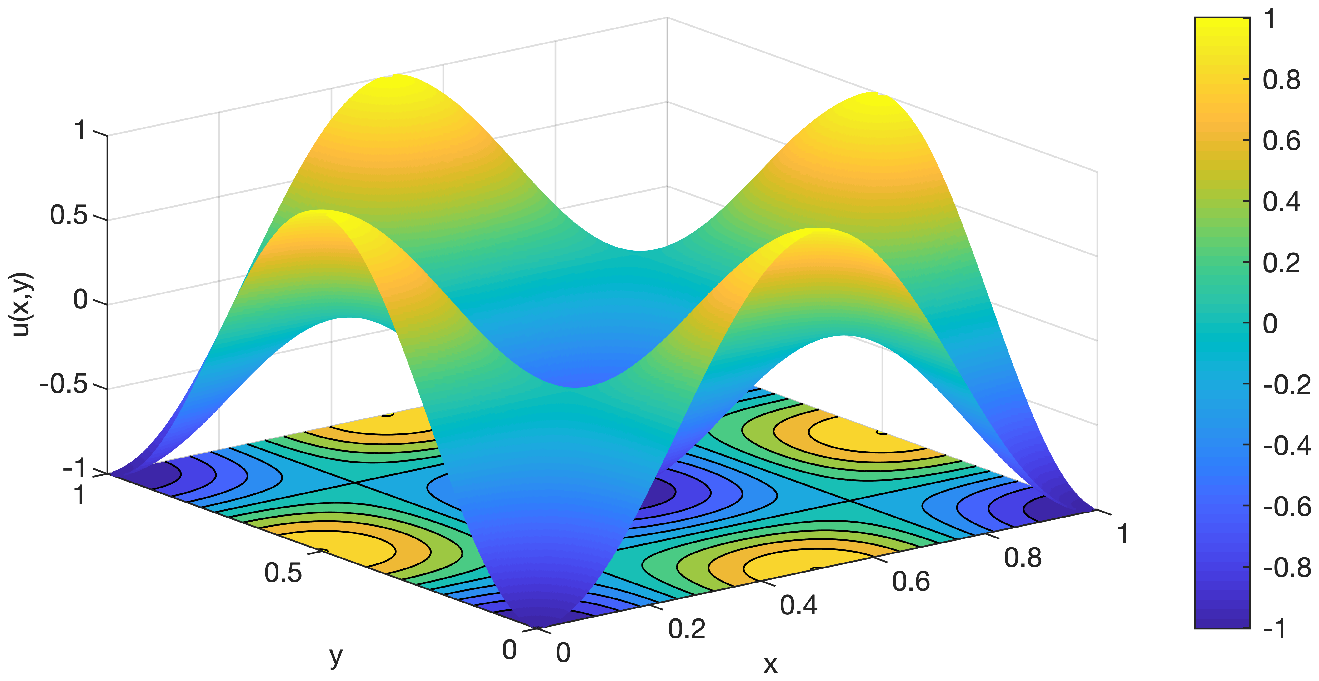}
  \caption{$\theta = 0, t = 0$}
  \label{fig:a11}
\end{subfigure}
\begin{subfigure}{.5\textwidth}
  \centering
  \includegraphics[width=1.0\linewidth]{u_t0.eps}
  \caption{$\theta = 1, t = 0$}
  \label{fig:b11}
\end{subfigure}
\begin{subfigure}{.5\textwidth}
  \centering
  \includegraphics[width=1.0\linewidth]{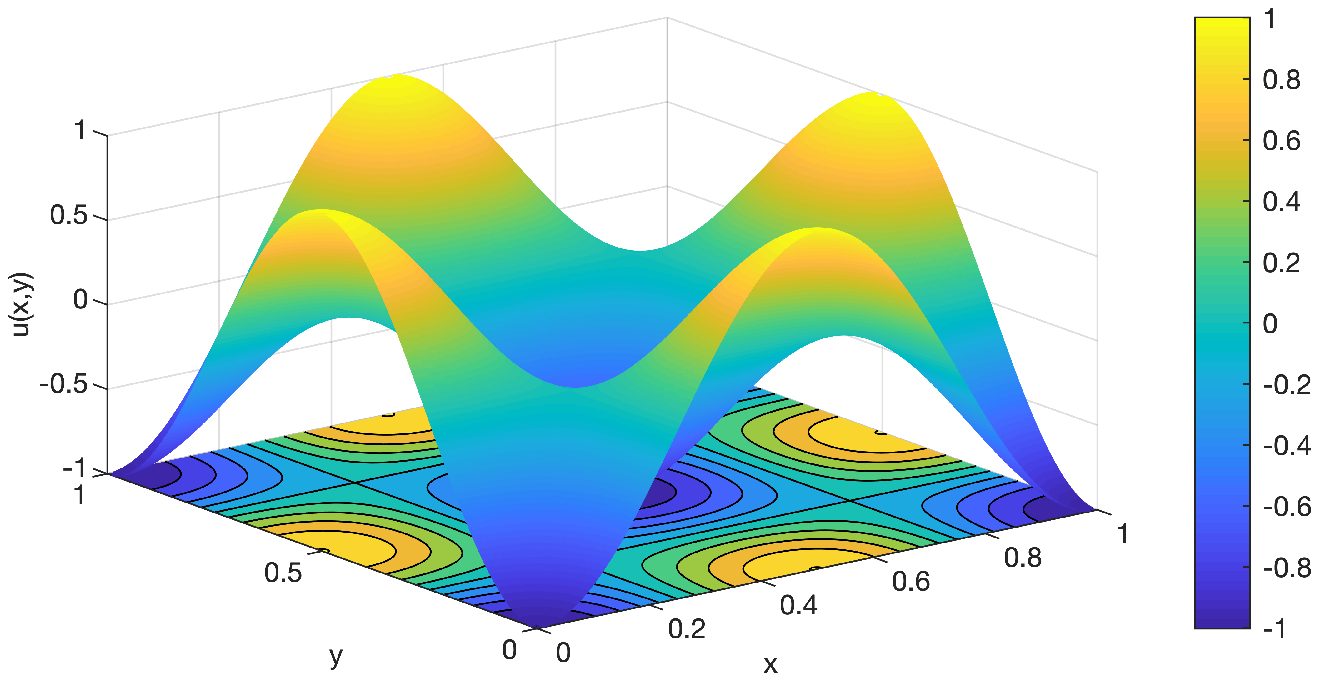}
  \caption{$\theta = 0, t = 0.69$}
  \label{fig:a12}
\end{subfigure}
\begin{subfigure}{.5\textwidth}
  \centering
  \includegraphics[width=1.0\linewidth]{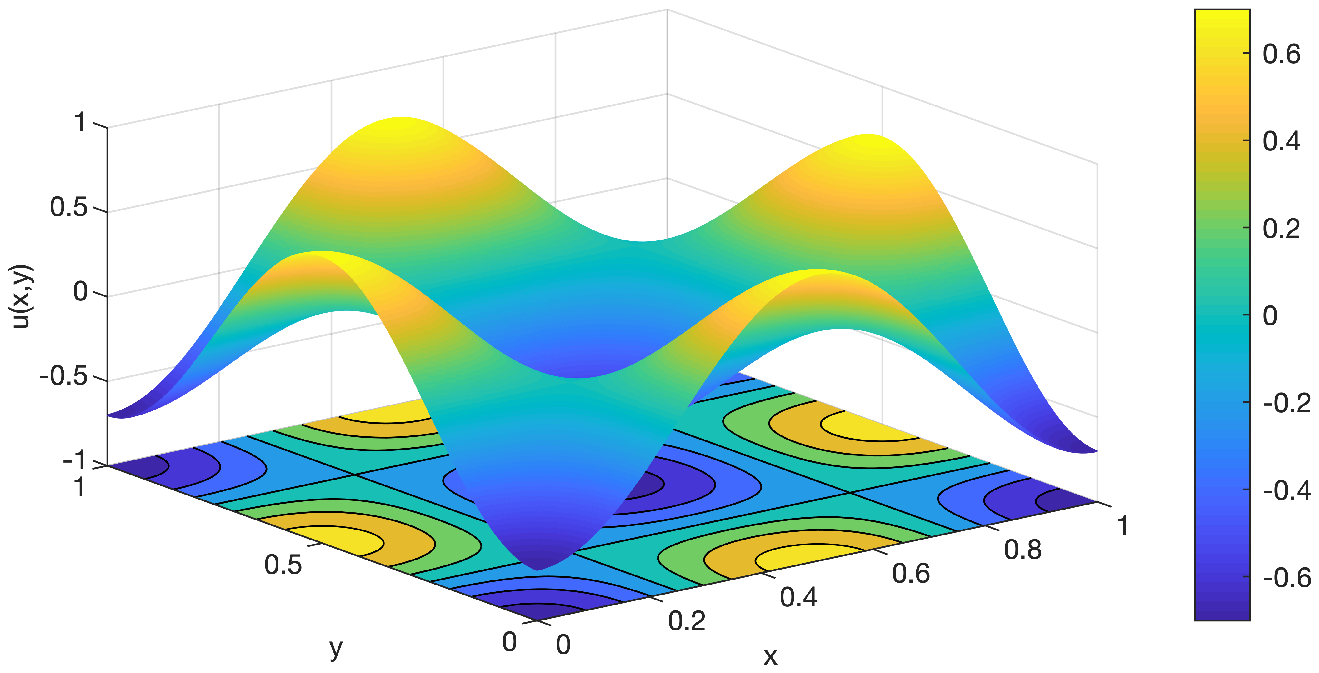}
  \caption{$\theta = 1, t = 0.69$}
  \label{fig:b12}
\end{subfigure}
\begin{subfigure}{.5\textwidth}
  \centering
  \includegraphics[width=1.0\linewidth]{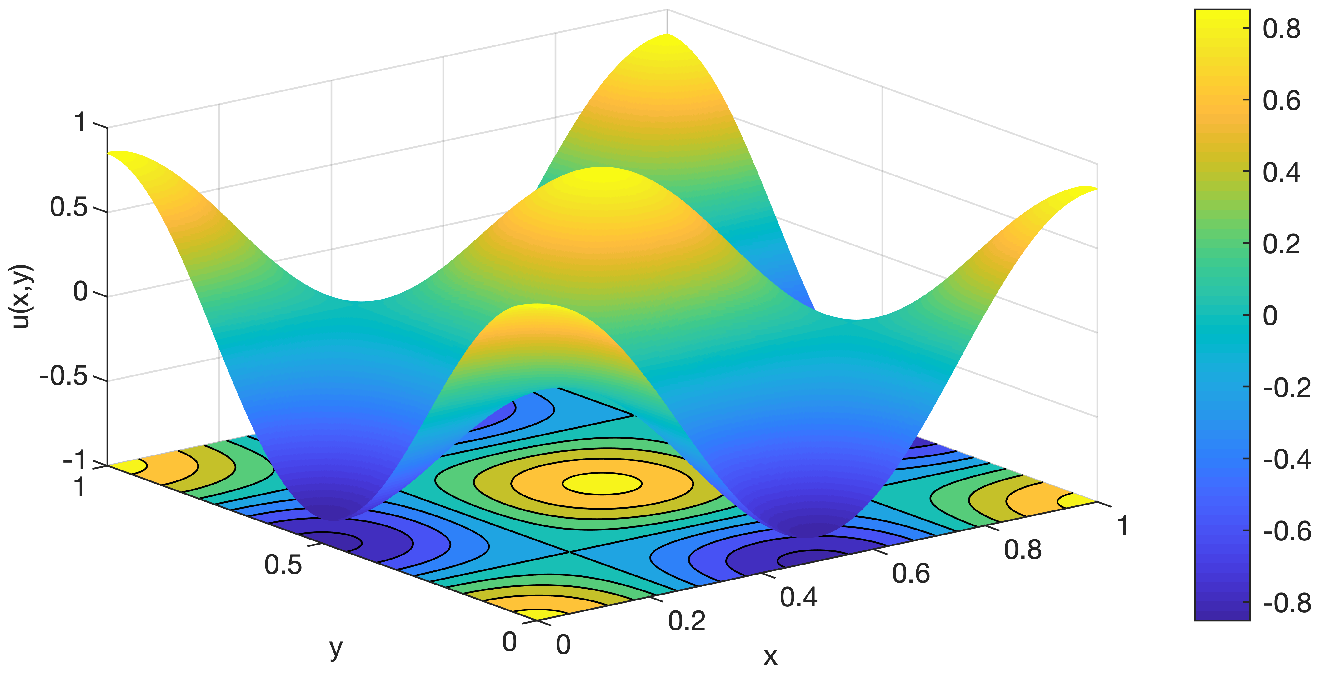}
  \caption{$\theta = 0, t = 6$}
  \label{fig:a13}
\end{subfigure}
\begin{subfigure}{.5\textwidth}
  \centering
  \includegraphics[width=1.0\linewidth]{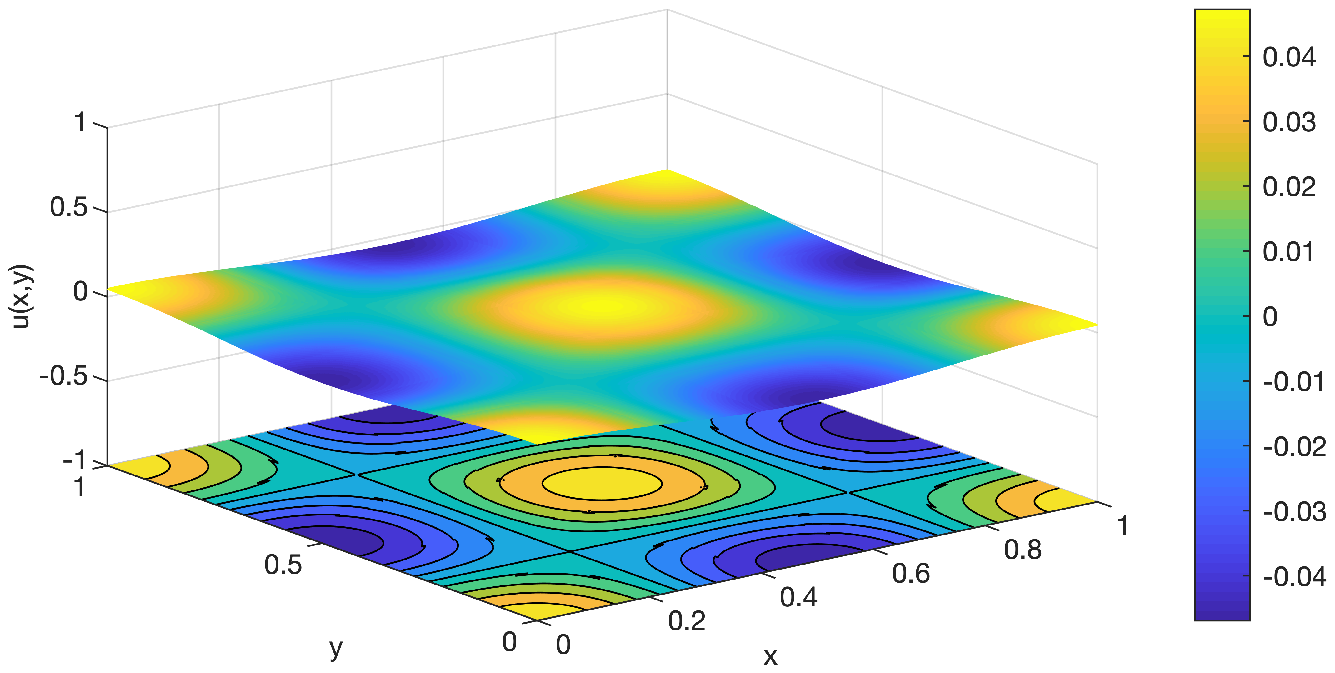}
  \caption{$\theta = 1, t = 6$}
  \label{fig:b13}
\end{subfigure}
\begin{subfigure}{.5\textwidth}
  \centering
  \includegraphics[width=1.0\linewidth]{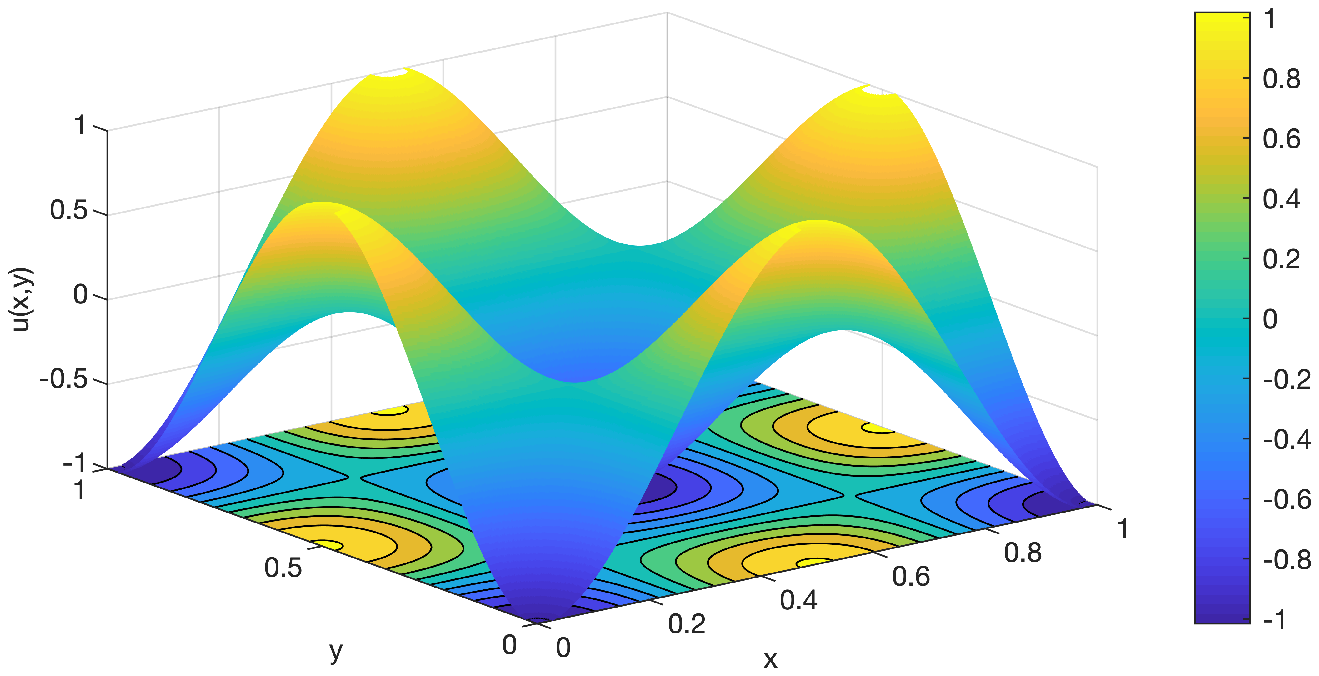}
  \caption{$\theta = 0, t = 10$}
  \label{fig:a14}
\end{subfigure}
\begin{subfigure}{.5\textwidth}
  \centering
  \includegraphics[width=1.0\linewidth]{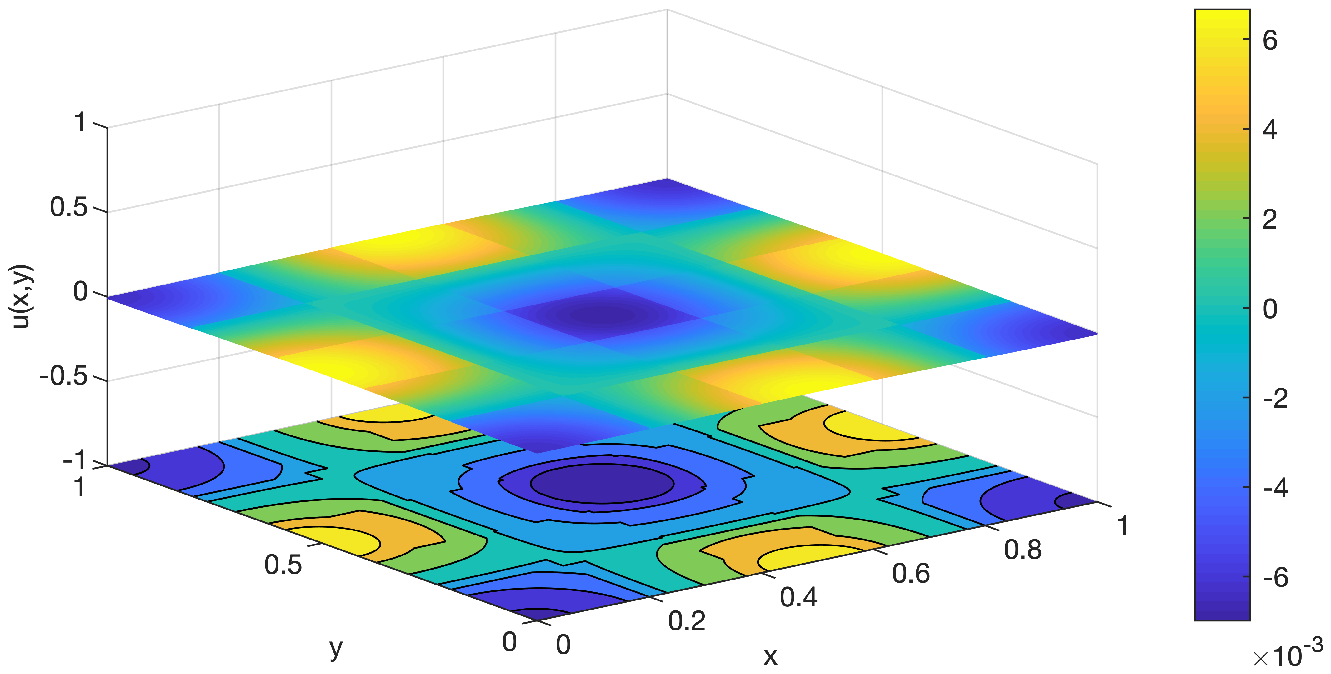}
  \caption{$\theta = 1, t = 10$}
  \label{fig:b14}
\end{subfigure}
\caption{\scriptsize{Plots of $u$ for the focusing equation (\ref{last_eq}) at times $t = 0,0.69,6,10$ with the S.-flux and $q_x = q_y = s_x = s_y = q = 4$. For the left column, $\theta = 0$; for the right column, $\theta = 1$.}}
\label{fig:u_plots_2d}
\end{figure}

Figure \ref{fig:u_plots_2d} shows the time evolution of $u$. From the left column to the right column are for the problem (\ref{last_eq}) without ($\theta = 0$) and with ($\theta = 1$) the dissipating term respectively. On the left column, we observe that the solution $u$ seems to be approximately periodic in time; it recovers its original shape around $t = 0.69$ at first. The right column is for $\theta = 1$, we note that the solution loses its energy as time goes by; at $t = 0.69$, it has a similar shape to the case $\theta = 0$, but the amplitude of the solution is smaller.

\section{Conclusions and future work}

In conclusion, we have demonstrated that the energy-based DG method for
second-order wave equations can be generalized to semilinear problems. In
particular we:
\begin{description}
\item[i.] Modified the weak form proposed in \ci{appelo2015new} so that the
  time derivatives of the approximate solution
  can be computed via the solution of a linear system of
equations in each element,
\item[ii.] Established the stability of the method by proving energy estimates
  for a wide choice of fluxes with mesh-independent
  parametrizations, including energy-conserving central or alternating fluxes
  as well as dissipative upwind fluxes,
\item[iii.] Derived suboptimal estimates of convergence in the energy norm,
\item[iv.] Observed, for polynomial degrees above $3$, optimal convergence in
  the $L^2$ norm for the
  energy-conserving alternating flux as well as for dissipative methods
  based on Sommerfeld flux splitting.
\end{description}

Our main target for future work will be extensions to
systems as well as to problems with more general
nonlinearities. This will enable applications to a wider variety of
problems of physical interest. Here again we plan to exploit the fact that the energy estimates
only depend on the satisfaction of the weak form for certain test functions.

\bibliographystyle{abbrv}
\bibliography{lurefs}

\end{document}